\def\red{\color{red}}
\def\rr{{\mathbb R}}
\def\rn{{{\rr}^n}}
\def\zz{{\mathbb Z}}
\def\nn{{\mathbb N}}
\def\cb{{\mathcal B}}
\def\fz{\infty}
\def\az{\alpha}
\def\bz{\beta}
\def\bgz{{\Gamma}}
\def\lz{\lambda}
\def\tz{\theta}
\def\lf{\left}
\def\r{\right}
\def\hs{\hspace{0.25cm}}
\def\ls{\lesssim}
\def\noz{\nonumber}
\def\wz{\widetilde}
\def\st{\subset}
\def\com{\complement}
\def\supp{\mathop\mathrm{\,supp\,}}
\def\loc{\mathop\mathrm{\,loc\,}}
\def\dydt{\,\frac{dy\,dt}{t^{n+1}}}
\theoremstyle{definition}
\newtheorem{theorem}{Theorem}[section]
\newtheorem{lemma}[theorem]{Lemma}
\newtheorem{definition}[theorem]{Definition}
\newtheorem{remark}[theorem]{Remark}
\numberwithin{equation}{section}
\begin{document}

\title{\bf\Large Quantitative Boundedness of Littlewood--Paley Functions
on Weighted Lebesgue Spaces in the Schr\"{o}dinger Setting
\footnotetext{\hspace{-0.35cm} 2010 {\it
Mathematics Subject Classification}. Primary 42B25; Secondary 35J10, 42B20.
\endgraf {\it Key words and phrases}. Schr\"{o}dinger operator,
Muckenhoupt weight, Littlewood--Paley function, quantitative boundedness.
\endgraf Junqiang Zhang is supported by the National
Natural Science Foundation of China (Grant Nos. 11801555 and 11971058) and
the Fundamental Research Funds for the Central Universities
(Grant No. 2018QS01).
Dachun Yang is supported by the National
Natural Science Foundation of China (Grant Nos. 11971058, 11761131002
and 11671185).
}}
\author{Junqiang Zhang
and Dachun Yang\,\footnote{Corresponding author,
E-mail: \texttt{dcyang@bnu.edu.cn}/{\red August 29, 2019}/Final version.}}
\date{}
\maketitle

\vspace{-0.8cm}

\begin{center}
\begin{minipage}{13.8cm}
{\small {\bf Abstract}\quad
Let $L:=-\Delta+V$ be the Schr\"{o}dinger
operator on $\mathbb{R}^n$ with $n\geq 3$,
where $V$ is a non-negative potential which belongs to certain
reverse H\"{o}lder class $RH_q(\mathbb{R}^n)$ with $q\in (n/2,\,\infty)$.
In this article, the authors obtain the quantitative weighted boundedness of Littlewood--Paley functions
$g_L$, $S_L$ and $g_{L,\,\lambda}^\ast$, associated to $L$,
on weighted Lebesgue spaces $L^p(w)$, where
$w$ belongs to the class of Muckenhoupt $A_p$ weights adapted to $L$.
}
\end{minipage}
\end{center}


\section{Introduction}\label{s1}

The class of Muckenhoupt weights was originally introduced by Muckenhoupt \cite{m72} in 1972.
Recall that a non-negative locally integrable function $w$ on $\rn$
is said to belong to the \emph{class of Muckenhoupt weights}, $A_p(\rn)$, with $p\in(1,\,\fz)$, if it satisfies
\begin{align*}
[w]_{A_p(\rn)}:=\sup_{B\st\rn}\lf\{\frac{1}{|B|}\int_B w(x)\,dx\r\}\lf\{\frac{1}{|B|}\int_B[w(x)]^{1-p'}\,dx\r\}^{p-1}<\fz,
\end{align*}
where the supremum is taken over all balls $B$ of $\rn$.
It is well known that Muckenhoupt $A_p(\rn)$ weights can be characterized by
the weighted $L^p$ boundedness of the Hardy--Littlewood maximal function and the Hilbert transform
(see, for instance, \cite{hmw73,m72}).
In recent years, the sharp $A_p$ bounds  have been obtained for many operators,
for instance, the Hilbert transform \cite{p07}, the Riesz transform \cite{p08}
and the general Calder\'{o}n--Zygmund operator \cite{h12}.
In particular, Lerner \cite{Ler11} established the sharp
$A_p$ bounds for the so-called \emph{intrinsic Littlewood--Paley functions}
introduced by Wilson \cite{w07} (see also \cite[p.\,103]{w08}).
In what follows, for any $\beta\in(0,\,1]$, let $\mathcal{C}_\beta(\rn)$
be the family of all functions $\phi$,
defined on $\rn$, such that $\supp \phi \st \{x\in\rn \,: |x|\le 1\}$,
$\int_\rn \phi(x)\,dx=0$ and, for any $x_1,\,x_2\in\rn$,
$$|\phi(x_1)-\phi(x_2)|\le|x_1-x_2|^{\beta}.$$
For any $f\in L^1_{\loc}(\rn)$ (the set of all locally integrable functions)
and $(y,t)\in \mathbb{R}^{n+1}_{+}:=\rn \times (0,\fz)$,
let
\begin{align}\label{eq-4.0}
A_{\beta}(f)(y,\,t):=\sup_{\phi\in\mathcal{C}_\beta(\rn)}
|f\ast\phi_t(y)|=\sup_{\phi\in\mathcal{C}_\beta(\rn)}
\lf|\int_{\rn}\phi_{t}(y-z)f(z)\,dz\r|,
\end{align}
where, for any $t\in (0,\,\fz)$, $\phi_t(\cdot):=t^{-n}\phi({\cdot}/t)$.
For any $\beta\in (0,1]$ and $f\in L^1_{\loc}(\rn)$,
the \emph{intrinsic Littlewood--Paley $g$-function $g_{\beta}(f)$} and
the \emph{intrinsic Lusin area function $G_{\beta}(f)$} are
defined, respectively, by setting, for any $x\in\rn$,
$$g_{\beta}(f)(x):=\lf\{\int^\fz_0[A_{\beta}(f)(x,t)]^2\,\frac{dt}{t}\r\}^{1/2}$$
and
\begin{align}\label{eq-0.7}
G_{\beta,\,\az}(f)(x):=\lf\{\int^\fz_0\int_{\{y\in\rn :\ |y-x|<\az t\}}
[A_{\beta}(f)(y,\,t)]^2\,\dydt\r\}^{1/2},\ \forall\,\az\in(0,\,\fz).
\end{align}
In particular, if $\az=1$ in \eqref{eq-0.7},
we simply write $G_{\beta,\,\az}(f)$ as $G_{\beta}(f)$.
These square functions are independent of any particular kernel $\phi$
and pointwise comparable with each other.
Let $p\in(1,\,\fz)$.
Lerner \cite[Theorem 1.1]{Ler11} showed that there exists a positive constant $C$ such that,
for any $w\in A_p(\rn)$ and $f\in L^p(w)$,
\begin{align}\label{eq-0.0}
\lf\|g_\beta(f)\r\|_{L^p(w)}+\lf\|G_\beta(f)\r\|_{L^p(w)}\le C[w]_{A_p(\rn)}^{\max\{\frac12,\,\frac{1}{p-1}\}}\|f\|_{L^p(w)},
\end{align}
where the exponent $\max\{\frac12,\,\frac{1}{p-1}\}$ of $[w]_{A_p(\rn)}$ is the best possible
for any $p\in(1,\,\fz)$ (see \cite[p.\,487]{l06}).
Here and thereafter, for any $p\in(0,\,\fz)$ and any non-negative locally integrable function $w$ on $\rn$,
we use $L^p(w)$ to denote the space of all measurable functions $f$ on $\rn$ such that
$$\|f\|_{L^p(w)}:=\lf[\int_\rn |f(x)|^pw(x)\,dx\r]^{1/p}<\fz.$$

On the other hand,
let $n\geq 3$ and consider the Schr\"{o}dinger operator on the Euclidean space $\rn$,
\begin{align}\label{eq-op}
L:=-\Delta+V,
\end{align}
where $\Delta:=\sum_{j=1}^n\frac{\partial^2}{\partial x_j^2}$
denotes the \emph{Laplacian operator} on $\rn$ and $V$ is a
non-negative potential which belongs to the reverse H\"{o}lder class $RH_q(\rn)$
with $q\in(n/2,\,\fz)$.
Throughout this article, we always let $L$ be as in \eqref{eq-op}.
Recall that a non-negative measurable function $V$ on $\rn$ is said to belong to the
\emph{reverse H\"{o}lder class $RH_q(\rn)$}, $q\in[1,\,\fz]$, if there exists a positive
constant $C$ such that, for any ball $B\st\rn$,
\begin{align*}
\lf\{\frac{1}{|B|}\int_B[V(x)]^q\,dx\r\}^{1/q}\le C\frac{1}{|B|}\int_B V(x)\,dx
\end{align*}
with the usual modification made when $q=\fz$.
It is well known that, if $V\in RH_q(\rn)$ with $q\in(1,\,\fz)$, then $V(x)\,dx$ is a doubling
measure on $\rn$ (see, for instance, \cite[p.\,196]{s93}), namely, there exists a constant
$C_0\in(1,\,\fz)$ such that,
for any $x\in\rn$ and $r\in(0,\,\fz)$,
\begin{align}\label{eq-db}
\int_{B(x,\,2r)}V(y)\,dy\le C_0\int_{B(x,\,r)}V(y)\,dy.
\end{align}
For any $x\in\rn$, let
\begin{equation}\label{eq aux}
\rho(x):=\sup\lf\{r\in(0,\,\fz):\ \frac{1}{r^{n-2}}\int_{B(x,\,r)}V(y)\,dy\le 1\r\}.
\end{equation}
This auxiliary function was first introduced by Shen \cite[Definition 2.1]{sh},
where $V\in RH_q(\rn)$ with $q\in[n/2,\,\fz)$ and $n\geq 3$.
Since $V\in RH_q(\rn)$ implies that $V\in RH_{q+\varepsilon}(\rn)$
for some $\varepsilon\in(0,\,\fz)$ (see, for instance, \cite[p.\,219]{s93}),
it follows that the assumption $q\in(n/2,\,\fz)$ is equivalent to $q\in[n/2,\,\fz)$.
For convenience, we \emph{always assume} that $V\in RH_q(\rn)$ with $q\in(n/2,\,\fz)$.
Then, via $\rho$, Bongioanni et al. \cite{bhs11} introduced
a new class of Muckenhoupt weights adapted to the Schr\"{o}dinger operator $L:=-\Delta+V$ as follows.
In what follows, for any $x_B\in\rn$ and $r_B\in(0,\,\fz)$, let
$$B(x_B,\,r_B):=\{y\in\rn:\ |y-x_B|<r_B\}.$$
\begin{definition}[\cite{bhs11}]\label{def-1}
Let $p\in(1,\,\fz)$, $\tz\in[0,\,\fz)$, $n\geq 3$ and $\rho$ be as in \eqref{eq aux} with $V\in RH_q(\rn)$
and $q\in(n/2,\,\fz)$.
Then $A_p^{\rho,\,\tz}(\rn)$ is defined to be  the set of all
non-negative locally integrable functions $w$ on $\rn$ such that
\begin{align*}
[w]_{A_p^{\rho,\,\tz}(\rn)}
:=\sup_{B\st\rn}\frac{1}{\Psi_\tz(B)|B|}\int_B w(x)\,dx
\lf\{\frac{1}{\Psi_\tz(B)|B|}\int_B [w(x)]^{-\frac{1}{p-1}}\,dx\r\}^{p/p'}
<\fz,
\end{align*}
where $1/p+1/p'=1$, the supremum is taken over all balls $B:=B(x_B,\,r_B)$ of $\rn$ and
\begin{align}\label{eq-psi}
\Psi_\tz(B):=\lf[1+\frac{r_B}{\rho(x_B)}\r]^{\tz}.
\end{align}
\end{definition}

\begin{remark}\label{rem-3}
\begin{enumerate}
\item[(i)]
In particular, if $\tz=0$ and $V\equiv 0$ in \eqref{eq aux},
then $A_p^{\rho,\,\tz}(\rn)$ coincides with the
classical Muckenhoupt class $A_p(\rn)$.

\item[(ii)]
If $p$, $n$ and $\rho$ are as in Definition \ref{def-1}, and
$0\le \tz_1\le\tz_2<\fz$, then $A_p^{\rho,\,\tz_1}(\rn)\st A_p^{\rho,\,\tz_2}(\rn)$
and, for any $w\in A_p^{\rho,\,\tz_1}(\rn)$,
$$[w]_{A_p^{\rho,\,\tz_2}(\rn)}\le[w]_{A_p^{\rho,\,\tz_1}(\rn)}.$$
\end{enumerate}
\end{remark}

Bongioanni et al. \cite{bhs11} showed that $A_p^{\rho,\,\tz}(\rn)$
is a properly larger class than $A_p(\rn)$,
which contains the class $A_p(\rn)$ of classical Muckenhoupt weights,
and has most of the properties parallel to $A_p(\rn)$
(see also \cite[Proposition 2.3]{t14}).
Moreover, the weighted boundedness, related to $A_p^{\rho,\,\tz}(\rn)$,
of many operators associated to the Schr\"{o}dinger
operator $L:=-\Delta+V$ was obtained in \cite{bhs11,bch13,ly16,t11,t15,tz15}.
Recently, Li et al. \cite{lrw16} introduced the fractional weight class
$A_{p,\,q}^{\rho,\,\tz}(\rn)$ adapted to $L$ and
obtained the quantitative weighted boundedness of the fractional maximal function and
the fractional integral operator associated to $L$.

Motivated by \cite{bhs11,lrw16,Ler11}, in this article, we consider the \emph{Littlewood--Paley
functions} $g_L$, $S_L$ and $g_{L,\,\lambda}^\ast$
associated to the Schr\"{o}dinger operator $L$, which are defined, respectively, by setting,
for any $f\in L^2(\rn)$ and $x\in\rn$,
\begin{align}\label{eq-0.8}
g_L(f)(x):=\lf[\int_0^\fz \lf|tLe^{-tL}(f)(x)\r|^2\,\frac{dt}{t}\r]^{1/2},
\end{align}
\begin{align}\label{eq-0.5}
S_{L,\,\az}(f)(x):=\lf[\int_0^\fz\int_{|x-y|<\az t}\lf|t^2L e^{-t^2L}(f)(y)\r|^2\,\dydt\r]^{1/2},
\ \forall\,\az\in(0,\,\fz),
\end{align}
and
\begin{align*}
g_{L,\,\lambda}^\ast(f)(x):=\lf[\int_0^\fz\int_\rn\lf(\frac{t}{t+|x-y|}\r)^{\lambda n}
\lf|t^2L e^{-t^2L}(f)(y)\r|^2\,\dydt\r]^{1/2},\ \forall\,\lambda\in(0,\,\fz).
\end{align*}
In particular, if $\az=1$ in \eqref{eq-0.5}, we simply write $S_{L,\,\az}$ as $S_L$.
We obtain the following quantitative weighted boundedness of $g_L$, $S_L$ and $g_{L,\,\lambda}^\ast$.
\begin{theorem}\label{thm-1}
Let $n\geq 3$, $L$ be as in \eqref{eq-op} with $V\in RH_q(\rn)$ and $q\in(n/2,\,\fz)$,
and $\rho$ as in \eqref{eq aux}.
Assume that $p\in(1,\,\fz)$, $\tz\in[0,\,\fz)$ and $\lambda\in(3+\frac{2}{n}\max\{\frac{3\tz}{2}k_0,\,1\},\,\fz)$,
where $k_0:=\max\{\frac{\log_2{C_0}+2-n}{2-n/q},\,1\}$ and $C_0$ is as in \eqref{eq-db}.
Then there exists a positive constant $C$ such that,
for any $w\in A_p^{\rho,\,\tz}(\rn)$ and $f\in L^p(w)$,
\begin{align*}
\|g_L(f)\|_{L^p(w)}+\|S_L(f)\|_{L^p(w)}+\lf\|g_{L,\,\lambda}^\ast(f)\r\|_{L^p(w)}
\le C[w]_{A_p^{\rho,\,\tz}(\rn)}^{\max\{\frac12,\,\frac{1}{p-1}\}}\|f\|_{L^p(w)}.
\end{align*}
\end{theorem}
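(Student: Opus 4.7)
The plan is to reduce Theorem~\ref{thm-1} to two sharp ingredients: Lerner's intrinsic Littlewood--Paley bound \eqref{eq-0.0} and the quantitative $L^p(w)$-bound (from \cite{lrw16}) for the Schr\"odinger-adapted maximal operator
\begin{align*}
M^{\rho,\tz}(f)(x):=\sup_{B\ni x}\frac{1}{\Psi_\tz(B)|B|}\int_B|f(y)|\,dy,
\end{align*}
which carries the exponent $[w]_{A_p^{\rho,\tz}(\rn)}^{1/(p-1)}$. The bridge between these two is a localization argument driven by the auxiliary function $\rho$. As the main preparatory input, I would collect Gaussian-type heat kernel estimates with $\rho$-decay for $L$: for every $N\in(0,\fz)$ there exist $C_N,c\in(0,\fz)$ such that the kernel $p_t$ of $e^{-tL}$ satisfies
\begin{align*}
|p_t(x,y)|\le C_Nt^{-n/2}e^{-|x-y|^2/(ct)}\lf[1+\frac{\sqrt{t}}{\rho(x)}+\frac{\sqrt{t}}{\rho(y)}\r]^{-N},
\end{align*}
together with analogous bounds for $t\pa_t p_t$, the kernel of $tLe^{-tL}$. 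These follow from Shen's estimates \cite{sh} and the perturbation identity $p_t-p_t^0=-\int_0^t\int_\rn p_{t-s}^0(x,z)V(z)p_s(z,y)\,dz\,ds$, where $p_t^0$ is the free heat kernel.

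Next, I would split each operator into a local part (scales $\sqrt{t}\le\rho(x)$) and a global part (scales $\sqrt{t}>\rho(x)$). In the local regime, $tLe^{-tL}$ essentially matches the corresponding free-semigroup operator up to a remainder enjoying rapid $\rho$-decay, so the local parts of $g_L(f)$ and $S_L(f)$ are pointwise majorized by the intrinsic square functions $g_\beta(f)$ and $G_\beta(f)$ of Section~\ref{s1} for a suitable $\beta\in(0,1]$. Since \eqref{eq-0.0} is stated for $A_p(\rn)$ rather than $A_p^{\rho,\tz}(\rn)$, I would then invoke the local equivalence property from \cite{bhs11}: on each critical ball $B(x_0,\rho(x_0))$, a weight $w\in A_p^{\rho,\tz}(\rn)$ restricts to an $A_p$-weight with constant $\ls[w]_{A_p^{\rho,\tz}(\rn)}$. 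Combined with a bounded-overlap cover of $\rn$ by critical balls, this upgrades \eqref{eq-0.0} to yield the exponent $\max\{1/2,1/(p-1)\}$ in $[w]_{A_p^{\rho,\tz}(\rn)}$ on the local parts. For the global parts, the factor $(\sqrt{t}/\rho(x))^{-N}$ with $N$ large lets me dominate pointwise by a geometric series of copies of $M^{\rho,\tz}(f)(x)$, whose $L^p(w)$-norm satisfies the required quantitative estimate by \cite{lrw16} and carries the exponent $1/(p-1)\le\max\{1/2,1/(p-1)\}$. Adding the two pieces closes the bound for $g_L$ and, in a parallel way, for $S_L$.

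For $g_{L,\lambda}^\ast$ the same blueprint applies, but the tail weight $(t/(t+|x-y|))^{\lambda n}$ must be integrated over all of $\rn$. I would decompose the spatial integral into annuli $\{|x-y|\sim 2^k\rho(x)\}$ for $k\in\zz$, apply the kernel bounds on each annulus, and balance $2^{-k\lambda n}$ against the annular volume $2^{kn}$ and, crucially, against the slow-variation loss $2^{k\tz k_0}$ in $\Psi_\tz$ coming from a change of centre; the exponent $k_0$ is precisely the doubling defect encoded by $C_0$ in \eqref{eq-db}. Summability of the resulting series imposes exactly the hypothesis $\lambda>3+\frac{2}{n}\max\{\frac{3\tz}{2}k_0,1\}$. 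The main obstacle is thus the global part of $g_{L,\lambda}^\ast$: obtaining a pointwise majorant by $M^{\rho,\tz}(f)$ while preserving the sharp $A_p^{\rho,\tz}$-exponent and controlling the interplay between $\lambda$, the annular decay of the kernel, and the $\rho$-variation. Once this bookkeeping is handled, the entire theorem follows by combining the local and global estimates as described above.
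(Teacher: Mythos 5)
Your high-level blueprint is on the right track and mirrors the paper's strategy: $\rho$-localization, Kurata-type heat kernel bounds with $\rho$-decay, comparison to the free heat semigroup via the perturbation formula, domination of the localized pieces by intrinsic square functions and radial/nontangential maximal functions (so that Lerner's sharp $A_p$ bound and the Bongioanni--Harboure--Salinas extension lemma can be applied on each critical ball with bounded overlap), and domination of the far pieces by the $\rho$-adapted maximal operator $M_\gamma$. However, there is one genuine gap that prevents your argument from yielding the stated theorem, and a related index bookkeeping error feeds into it.

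The gap is the omission of a quantitative extrapolation theorem for the classes $A_p^{\rho,\tz}(\rn)$. You assert that the far piece is dominated by $M^{\rho,\tz}(f)$ with constant $[w]_{A_p^{\rho,\tz}(\rn)}^{1/(p-1)}$, but the actual estimate from \cite{lrw16} (Lemma \ref{lem-10}) gives $\|M_\gamma(f)\|_{L^p(w)}\ls[w]_{A_p^{\rho,\gamma/p'}(\rn)}^{1/(p-1)}\|f\|_{L^p(w)}$. To land in the class $A_p^{\rho,\tz}(\rn)$ assumed in the theorem, one is forced to take $\gamma=\tz p'$, which depends on $p$. Tracing this through the $g_{L,\lambda}^\ast$ argument, the decay parameter $N$ in the kernel estimate must satisfy $N>(N_0+1)(\gamma+\tau)$, and convergence of the dyadic sum $\sum_j2^{-j\lambda n/2}S_{L,2^j}(f)$ requires $\lambda>3+\frac{2}{n}\max\{N,1\}$. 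With $\gamma=\tz p'$, the resulting admissible range of $\lambda$ becomes $p$-dependent and is \emph{strictly narrower} than $\lambda>3+\frac{2}{n}\max\{\frac{3\tz}{2}k_0,1\}$ whenever $p<3$. The paper avoids this by establishing the estimate only at $p_0=3$ (where $\gamma=\frac{3\tz}{2}$ and both exponents $1/(p_0-1)$ and $\max\{\frac12,\frac{1}{p_0-1}\}$ collapse to $\frac12$), and then applying a newly proven quantitative Rubio de Francia extrapolation for $A_p^{\rho,\tz}(\rn)$ (Lemmas \ref{lem-4} and \ref{lem-5}), which preserves the exponent $\frac12\cdot\max\{1,\frac{p_0-1}{p-1}\}=\max\{\frac12,\frac{1}{p-1}\}$ and keeps the $\lambda$-range uniform in $p$. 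Without this extrapolation step (which is itself one of the paper's main technical contributions), your approach would prove a weaker theorem with a $p$-dependent admissible $\lambda$. Secondary imprecisions --- your primary local/global split is temporal ($\sqrt{t}\lessgtr\rho(x)$) rather than the paper's spatial truncation $f=f{\mathbf 1}_{B_x}+f{\mathbf 1}_{B_x^\complement}$, and your annular decomposition for $g_{L,\lambda}^\ast$ is written in terms of $\rho(x)$ rather than the scale $t$, which is what produces the aperture family $S_{L,2^j}$ --- are repairable, but the extrapolation ingredient must be added.
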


\begin{remark}\label{rem-4}
\begin{enumerate}
\item[(i)]
In particular, when $L:=-\Delta$ is the Laplacian operator in \eqref{eq-op},
then $g_{-\Delta}$, $S_{-\Delta}$ and $g_{-\Delta,\,\lz}^\ast$
are just the classical Littlewood--Paley functions [see \eqref{eq-g} and \eqref{eq-square} below]
and $A_p^{\rho,\,\tz}(\rn)=A_p(\rn)$.
In this case, since $g_{-\Delta}$ and $S_{-\Delta}$ are pointwise dominated by the intrinsic square function
$G_\beta$ with $\beta=1$ (see Lemma \ref{lem-9} below),
we find that the quantitative weighted boundedness of $g_{-\Delta}$ and $S_{-\Delta}$ in
Theorem \ref{thm-1} is implied by \eqref{eq-0.0}.

\item[(ii)]
We point out that the range $\lambda\in(3+\frac{2}{n}\max\{\frac{3\tz}{2}k_0,\,1\},\,\fz)$
in Theorem \ref{thm-1} may not be the best possible.
Let $\varphi\in C^\fz(\rn)$ satisfy $\supp\varphi\st\{x\in\rn:\ |x|\le1\}$
and $\int_\rn\varphi(x)\,dx=0$. For any $f\in L_{\loc}^1(\rn)$ and $x\in\rn$, define
\begin{align}\label{eq-4.4}
g_{\varphi,\,\lambda}^\ast(f)(x)
:=\lf[\int_0^\fz\int_\rn\lf(\frac{t}{t+|x-y|}\r)^{\lambda n}\lf|f\ast\varphi_t(y)\r|^2\,\dydt\r]^{1/2},
\ \forall\,\lambda\in(0,\,\fz),
\end{align}
where $\varphi_t(\cdot):=t^{-n}\varphi(\cdot/t)$ for any $t\in (0,\fz)$.
Lerner \cite[Theorem 1.2]{l06} showed that, if $p\in(1,\,\fz)$ and $\lambda\in(3,\,\fz)$,
then there exists a positive constant $C$ such that, for any $w\in A_p(\rn)$ and $f\in L^p(w)$,
\begin{align}\label{eq-4.2}
\lf\|M\lf(g_{\varphi,\,\lambda}^\ast(f)\r)\r\|_{L^p(w)}
\le C[w]_{A_p(\rn)}^{\max\{\frac12,\,\frac{1}{p-1}\}}\|M(f)\|_{L^p(w)},
\end{align}
where $M$ denotes the classical Hardy-Littlewood maximal operator as in \eqref{eq-0.4} below.
It was also pointed out in \cite[Theorem 1.2]{l06} that the
power $\max\{\frac12,\,\frac{1}{p-1}\}$ of $[w]_{A_p(\rn)}$ is sharp.
Moreover, by \eqref{eq-4.2} and the sharp boundedness of $M$ obtained by Buckley \cite{b93}
[see also \eqref{eq-0.3} below], it is easy to see that, if $p\in(1,\,\fz)$ and $\lz\in(3,\,\fz)$,
then there exists a positive constant $C$ such that, for any $w\in A_p(\rn)$ and $f\in L^p(w)$,
\begin{align}\label{eq-4.1}
\lf\|g_{\varphi,\,\lambda}^\ast(f)\r\|\le C[w]_{A_p(\rn)}^{\nu_p}\|f\|_{L^p(w)},
\end{align}
where $\nu_p:=\frac{1}{p-1}+\max\{\frac12,\,\frac{1}{p-1}\}$.
However, Lerner \cite{l06} conjectured that the best $\nu_p$ should be
$\max\{\frac12,\,\frac{1}{p-1}\}$.
Then it was proved in \cite[Corollary 1.3]{l08} that,
if $p\in(1,\,\fz)$ and $\lambda\in(2,\,\fz)$, then $\nu_p=\max\frac{1}{p-1}\{\frac{p}{2},\,1\}$
satisfies \eqref{eq-4.1}.
Observe that, for any $p\in(1,\,\fz)$,
$$\max\lf\{\frac12,\,\frac{1}{p-1}\r\}\le\max\frac{1}{p-1}\lf\{\frac{p}{2},\,1\r\}
<\frac{1}{p-1}+\max\lf\{\frac12,\,\frac{1}{p-1}\r\}.$$
Finally, we point out that Lerner \cite[Theorem 1.1]{Ler11} proved the conjecture that
\eqref{eq-4.1} holds true for $\nu_p=\max\{\frac12,\,\frac{1}{p-1}\}$.
Indeed, by \cite[Exercise 6.2]{w08},
we know that there exists a positive constant $C:=C_{(\beta,\,n)}$ such that,
for any $\beta\in(0,\,1]$, $\az\in[1,\,\fz)$, $f\in L^1_{\loc}(\rn)$ and $x\in\rn$,
$$G_{\beta,\,\az}(f)(x)\le C \az^{\frac{3n}{2}+\beta}G_\beta(f)(x),$$
where $G_{\beta,\,\az}$ is as in \eqref{eq-0.7}. Then, from this and
\eqref{eq-0.0}, we deduce that, if $p\in(1,\,\fz)$, $\beta\in(0,\,1]$
and $\lz\in(3+\frac{2\beta}{n},\,\fz)$,
then there exists a positive constant $C$ such that, for any $w\in A_p(\rn)$
and $f\in L^p(w)$,
\begin{align}\label{eq-4.3}
\lf\|g_{\beta,\,\lambda}^\ast(f)\r\|_{L^p(w)}\le C
[w]_{A_p(\rn)}^{\max\{\frac12,\,\frac{1}{p-1}\}}\|f\|_{L^p(w)},
\end{align}
where $g_{\beta,\,\lambda}^\ast(f)$ is the intrinsic $g_\lz^\ast$-function
which is defined as in \eqref{eq-4.4} with $f\ast\varphi_t(y)$ therein
replaced by $A_\beta(f)(y,\,t)$ of \eqref{eq-4.0}.
In particular, when $L:=-\Delta$ is the Laplacian operator in \eqref{eq-op},
we could choose $\tz:=0$ in Theorem \ref{thm-1}. Then, by Theorem \ref{thm-1},
we conclude that, if $\lz\in(3+\frac{2}{n},\,\fz)$ and $p\in(1,\,\fz)$, then
there exists a positive constant $C$ such that, for any $w\in A_p(\rn)$ and $f\in L^p(w)$,
$$\lf\|g_{-\Delta,\,\lambda}^\ast(f)\r\|_{L^p(w)}
\le C[w]_{A_p(\rn)}^{\max\{\frac12,\,\frac{1}{p-1}\}}\|f\|_{L^p(w)}.$$
The range $\lz\in(3+\frac{2}{n},\,\fz)$ in this case coincides with the range
$\lz\in(3+\frac{2\beta}{n},\,\fz)$,
because, by Lemma \ref{lem-9} below, we know that there exists a positive
constant $C$ such that, for any $f\in L^p(w)$ and $x\in\rn$,
$$g_{-\Delta,\,\lambda}^\ast(f)(x)\le Cg_{1,\,\lambda}^\ast(f)(x).$$
However, we do not know whether or not Theorem \ref{thm-1} holds true
for smaller $\lz$, for instance, $\lz\in(2,\,3+\frac{2}{n}\max\{\frac{3\tz}{2}k_0,\,1\}]$,
where $k_0$ is as in Theorem \ref{thm-1}.
\end{enumerate}
\end{remark}
We prove Theorem \ref{thm-1} in Section \ref{s2}
by borrowing some ideas from \cite{bhs11}, where Bongioanni et al. introduced a $\rho$-localized method
to obtain the boundedness of operators associated to the Schr\"{o}dinger operator $L$
on weighted Lebesgue spaces. This is quite different from the method used in \cite{lrw16}.
Besides this, we also need a quantitative version of the
extrapolation theorem for $A_{p}^{\rho,\,\tz}(\rn)$ weights (see Lemma \ref{lem-4} below).
The key idea is to compare
the operators $g_L$ and $S_L$, respectively,
with the classical ones $g_{-\Delta}$ and $S_{-\Delta}$
when they are restricted to local regions related to $\rho$ (see Section \ref{s-2} below).
This leads us to establish the quantitative weighted estimates for $\rho$-localized
classical maximal functions and Littlewood--Paley functions
(see Lemma \ref{lem-2} below).
To this end, we make use of some results related to the extensions of weights and the
sharp weighted estimates of the intrinsic Lusin area function $G_{\beta}$ [see \eqref{eq-0.0} above].

For the quantitative weighted boundedness of $g_{L,\,\lz}^\ast$, we also have the following result.
\begin{theorem}\label{thm-2}
Let $n\geq 3$, $L$ be as in \eqref{eq-op} with $V\in RH_q(\rn)$ and $q\in(n/2,\,\fz)$,
and $\rho$ as in \eqref{eq aux}.
Assume that $p\in(1,\,\fz)$, $\tz\in[0,\,\fz)$ and $\lz\in(2[1+\frac{2\tz}{n}],\,\fz)$.
Then there exists a positive constant $C$ such that,
for any $w\in A_p^{\rho,\,\tz}(\rn)$ and $f\in L^p(w)$,
\begin{align*}
\lf\|g_{L,\,\lambda}^\ast(f)\r\|_{L^p(w)}
\le C[w]_{A_p^{\rho,\,\tz}(\rn)}^{\max\{1,\,\frac{1}{p-1}\}+\max\{\frac12,\,\frac{1}{p-1}\}}\|f\|_{L^p(w)}.
\end{align*}
\end{theorem}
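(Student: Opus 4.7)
The plan is to reduce the weighted boundedness of $g_{L,\,\lz}^\ast$ to that of $S_L$ (which is already controlled by Theorem \ref{thm-1}) by combining a pointwise annular decomposition with a quantitative $L^p(w)$ change-of-aperture estimate for the Lusin area function in the Schr\"odinger setting. First I would split the $y$-integral defining $g_{L,\,\lz}^\ast$ along the dyadic annuli $\{y:\ 2^{k-1}t\le|x-y|<2^kt\}$ and use that $t/(t+|x-y|)\ls 2^{-k}$ on the $k$th annulus; this produces the pointwise bound
\begin{align*}
\lf[g_{L,\,\lz}^\ast(f)(x)\r]^2
\le C\sum_{k=0}^\fz 2^{-k\lz n}\lf[S_{L,\,2^k}(f)(x)\r]^2,\qquad\forall\,x\in\rn,
\end{align*}
with $S_{L,\,\az}$ as in \eqref{eq-0.5}.

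The main work is then to prove a change-of-aperture lemma of the form
\begin{align*}
\lf\|S_{L,\,\az}(f)\r\|_{L^p(w)}
\le C\,\az^{n+2\tz}\,[w]_{A_p^{\rho,\,\tz}(\rn)}^{\max\{1,\,\frac{1}{p-1}\}}\,
\|S_L(f)\|_{L^p(w)},\qquad\forall\,\az\in[1,\,\fz).
\end{align*}
The natural route is a Fubini argument: writing $F(y,t):=t^2Le^{-t^2L}(f)(y)$ and integrating first in $x$ turns the characteristic function of $\{|x-y|<\az t\}$ into the weight of the ball $B(y,\az t)$, so that the quantitative doubling property of $A_p^{\rho,\,\tz}(\rn)$ weights, namely
$w(B(y,\az t))\ls [w]_{A_p^{\rho,\,\tz}(\rn)}\az^{np}(1+\az t/\rho(y))^{p\tz}w(B(y,t))$, together with a $\rho$-adapted Hardy--Littlewood-type maximal estimate (whose sharp $L^p(w)$ norm scales like $[w]^{\max\{1,\,1/(p-1)\}}$ in Buckley's spirit), produces the declared $\az^{n+2\tz}$ and the weight exponent $\max\{1,\,1/(p-1)\}$.

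With these two ingredients in hand, I would integrate the pointwise bound above against $w$, using Minkowski's inequality in $L^{p/2}(w)$ when $p\ge 2$ and the subadditivity $(a+b)^{p/2}\le a^{p/2}+b^{p/2}$ when $p\in(1,\,2)$; in both cases this leads to
\begin{align*}
\lf\|g_{L,\,\lz}^\ast(f)\r\|_{L^p(w)}
\ls\lf(\sum_{k=0}^\fz 2^{k(2n+4\tz-\lz n)}\r)^{1/2}
[w]_{A_p^{\rho,\,\tz}(\rn)}^{\max\{1,\,\frac{1}{p-1}\}}\,\|S_L(f)\|_{L^p(w)}.
\end{align*}
The geometric series converges precisely because $\lz>2(1+2\tz/n)$, which is exactly the standing hypothesis. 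Applying Theorem \ref{thm-1} to dominate $\|S_L(f)\|_{L^p(w)}$ by $C[w]_{A_p^{\rho,\,\tz}(\rn)}^{\max\{1/2,\,1/(p-1)\}}\|f\|_{L^p(w)}$ and multiplying the exponents yields the asserted bound with total weight power $\max\{1,\,1/(p-1)\}+\max\{1/2,\,1/(p-1)\}$.

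The main obstacle is establishing the change-of-aperture estimate with the precise dependence stated. In the classical Euclidean setting Wilson's pointwise inequality $G_{\bz,\,\az}\le C\az^{3n/2+\bz}G_\bz$ supplies change of aperture essentially for free (so the weight exponent does not deteriorate, as in Theorem \ref{thm-1}), but no such clean pointwise comparison is available for the Schr\"odinger semigroup $e^{-t^2L}$; the aperture must therefore be traded at the $L^p(w)$ level, and this is exactly what forces the extra $[w]^{\max\{1,\,1/(p-1)\}}$ factor that makes the final exponent in Theorem \ref{thm-2} worse than the one in Theorem \ref{thm-1}, while permitting the smaller range of $\lz$. Careful tracking of the perturbation terms $(1+\az t/\rho(y))^\tz$ is required to ensure that they contribute only the $\az^{2\tz}$ factor and do not leak into a residual dependence on $t$ or on the base point.
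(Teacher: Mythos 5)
Your overall architecture matches the paper's: dyadic annular decomposition of $g_{L,\lambda}^\ast$ into aperture operators $S_{L,\alpha}$, a Fubini-based change-of-aperture estimate proved at $p=2$ and extrapolated via Lemma \ref{lem-4}, and then summing a geometric series under $\lambda>2(1+2\theta/n)$. However, there is a genuine gap in your change-of-aperture lemma. When you integrate first in $x$ and apply the $A_p^{\rho,\theta}$-doubling estimate (Lemma \ref{lem-11}) to $w(B(y,\alpha t))$, the factor that appears is $[1+\alpha t/\rho(y)]^{p'p\theta}$; after peeling off $\alpha^{\text{(power)}}$, you are left with an extra factor $[1+t/\rho(y)]^{4\theta}$ \emph{inside} the integral over $(y,t)$, and this factor is unbounded as $t\to\infty$. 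Your claim that the perturbation terms ``contribute only the $\alpha^{2\theta}$ factor and do not leak into a residual dependence on $t$'' is therefore incorrect: they genuinely leak a $t$-dependent weight, and the quantity you land on is not $\|S_L(f)\|_{L^p(w)}$ but the norm of an auxiliary operator $\widetilde{S}_L^{4\theta}(f)(x):=\{\int_0^\infty\int_{|x-y|<t}[1+t/\rho(y)]^{4\theta}|t^2Le^{-t^2L}(f)(y)|^2\,dy\,dt/t^{n+1}\}^{1/2}$, which pointwise dominates $S_L(f)$ and need not be comparable to it.

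The paper's proof closes this gap by introducing exactly this auxiliary function $\widetilde{S}_L^\mu$ and proving (as a separate, non-trivial step) that $\|\widetilde{S}_L^\mu(f)\|_{L^p(w)}\lesssim[w]_{A_p^{\rho,\theta}(\mathbb{R}^n)}^{\max\{1/2,\,1/(p-1)\}}\|f\|_{L^p(w)}$, by splitting the $t$-integral at $\rho(x)$ and using the fast decay $[1+t/\rho(y)]^{-N}$ of the kernel $K_{t^2}$ from Lemma \ref{lem-8} to absorb the polynomial growth $[1+t/\rho(y)]^\mu$; this re-runs, with the extra weight, the localization and $M_\gamma$/$S^{\mathrm{loc}}_{-\Delta,\alpha}$ analysis from the proof of Theorem \ref{thm-1}. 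Only then is the change-of-aperture estimate $\|S_{L,\alpha}(f)\|_{L^p(w)}\lesssim\alpha^{n+2\theta}[w]^{\max\{1,1/(p-1)\}}\|\widetilde{S}_L^{4\theta}(f)\|_{L^p(w)}$ chained with that bound to give the final exponent $\max\{1,1/(p-1)\}+\max\{1/2,1/(p-1)\}$. You should replace your appeal to Theorem \ref{thm-1} for $\|S_L(f)\|_{L^p(w)}$ by a direct estimate of $\|\widetilde{S}_L^{4\theta}(f)\|_{L^p(w)}$ against $\|f\|_{L^p(w)}$ and then carry out the kernel argument that makes this auxiliary estimate hold; as written, your proof is incomplete at precisely this point.
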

Compared with Theorem \ref{thm-1}, the range of $\lambda$ in Theorem \ref{thm-2}
is wider than the corresponding one in Theorem \ref{thm-1}; however, the power index
of the weight constant $[w]_{A_p^{\rho,\,\tz}(\rn)}$ in Theorem \ref{thm-2} is larger
than the corresponding one in Theorem \ref{thm-1}.

We prove Theorem \ref{thm-2} in Section \ref{s2}
by borrowing some ideas from the proof of \cite[Proposition 3.2]{mp17},
where Martell et al. established some sharp weighted $L^p$ estimates of square functions with various angles
by using the extrapolation theorem for classical Muckenhoupt $A_p$ weights.

We end this section by making some conventions on notation.
In this article,
we denote by $C$ a positive constant which is independent of the main parameters,
but it may vary from line to line.
We also use $C_{(\az, \bz,\ldots)}$ to denote a positive constant depending on
the parameters $\az$, $\bz$, $\ldots$.
The \emph{symbol $f\ls g$} means that $f\le Cg$.
If $f\ls g$ and $g\ls f$,  we then write $f\sim g$. We also use the following
convention: If $f\le Cg$ and $g=h$ or $g\le h$, we then write $f\ls g\sim h$
or $f\ls g\ls h$, \emph{rather than} $f\ls g=h$
or $f\ls g\le h$. Let $\nn:=\{1,\,2,\,\ldots\}$ and $\zz_+:=\nn\cup\{0\}$.
For any measurable subset $E$ of $\rn$, we denote by $E^\com$ the
\emph{set $\rn\setminus E$} and also by ${\mathbf 1}_E$ its \emph{characteristic function}.
We also denote by $C_c^\fz(\rn)$ the set of all infinitely differential functions with compact supports.
For any ball
$B:=B(x_B,r_B):=\{y\in\rn:\ |x_B-y|<r_B\}\st\rn$,
$x_B\in\rn$ and $r_B\in(0,\,\fz)$, and $\az\in(0,\,\fz)$,
we let $\az B:=B(x_B,\az r_B)$.
For any $p\in[1,\,\fz]$, $p'$ denotes its \emph{conjugate number},
namely, $1/p+1/p'=1$.

\section{Preliminaries and extrapolation theorem}\label{s2}

In this section, we first recall some notions and properties
related to the Schr\"{o}dinger operator $L$.
Then we establish a quantitative version of the extrapolation theorem
for $A_{p}^{\rho,\,\tz}(\rn)$ weights.

For the auxiliary function $\rho(\cdot)$ in \eqref{eq aux},
we have the following lemma,
which is just \cite[Lemma 1.4]{sh}.

\begin{lemma}[\cite{sh}]\label{lem aux}
Let $n\geq 3$, $\rho$ be as in \eqref{eq aux} with $V\in RH_q(\rn)$ and $q\in(n/2,\,\fz)$.
Then there exist
$C\in(0,\,\fz)$ and $N_0\in(0,\,\fz)$ such that, for any $x$, $y\in\rn$,
$$C^{-1}\rho(x)\lf[1+\frac{|x-y|}{\rho(x)}\r]^{-N_0}\le\rho(y)
\le C\rho(x)\lf[1+\frac{|x-y|}{\rho(x)}\r]^{\frac{N_0}{N_0+1}}.$$
\end{lemma}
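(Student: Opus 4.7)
The plan is to exploit the fact that $\rho(x)$ is, by definition, the scale at which the dimensionless quantity $N(x,r):=r^{2-n}\int_{B(x,r)}V(z)\,dz$ equals $1$, together with two one-sided quantitative monotonicity bounds on $N(x,\cdot)$ extracted from the reverse H\"{o}lder and doubling hypotheses on $V$.

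First I would establish two scaling estimates for $N(x,\cdot)$. For $0<r\le R$, H\"{o}lder's inequality combined with $V\in RH_q(\rn)$ applied on $B(x,R)$ yields
$$N(x,r)\le C\lf(\frac{r}{R}\r)^{2-n/q}N(x,R),$$
while iterating the doubling inequality \eqref{eq-db} from scale $r$ up to scale $R$ gives
$$N(x,R)\le C\lf(\frac{R}{r}\r)^{\log_2 C_0-(n-2)}N(x,r).$$
Since $q>n/2$, both exponents are positive, so $N(x,\cdot)$ is essentially strictly increasing in a quantitative sense, and one gets $N(x,\rho(x))\sim 1$, which pins $\rho(x)$ down up to multiplicative constants (note that $\log_2 C_0 - (n-2)$ already controls the exponent $N_0$ appearing in the lemma).

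Second, for the lower bound $\rho(y)\ge C^{-1}\rho(x)[1+t/\rho(x)]^{-N_0}$ with $t:=|x-y|$, I would use the inclusion $B(y,s)\st B(x,s+t)$ to write
$$N(y,s)\le\lf(\frac{s+t}{s}\r)^{n-2}N(x,s+t),$$
and then bound $N(x,s+t)$ via the first scaling when $s+t\le\rho(x)$ and via the second scaling when $s+t\ge\rho(x)$. Choosing $s:=c\rho(x)[1+t/\rho(x)]^{-N_0}$ with $c$ small enough and $N_0$ large enough to make the case analysis close, one obtains $N(y,s)\le 1$ and hence $\rho(y)\ge s$ by the definition of $\rho$. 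The matching upper bound then follows by pure symmetry: applying the just-proved lower bound with the roles of $x$ and $y$ swapped yields $\rho(x)\ge C^{-1}\rho(y)[1+t/\rho(y)]^{-N_0}$, and in the regime $t\gg\rho(y)$ this collapses algebraically to $\rho(y)^{N_0+1}\le C\rho(x)\,t^{N_0}$, which is exactly the claimed inequality with exponent $N_0/(N_0+1)$; the regime $t\le\rho(y)$ is covered by the previous step, which already forces $\rho(y)\le C\rho(x)$.

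I expect the main obstacle to be the clean execution of the case analysis in the second step, namely the careful combination of the two scalings across the transition $s+t\sim\rho(x)$ and the precise tracking of how large $N_0$ must be taken so that the worst case still closes. Once that value of $N_0$ is identified, the asymmetric upper exponent $N_0/(N_0+1)$ comes out of the symmetric duality argument in the third step essentially for free, and the reason it is strictly less than $1$ is exactly that the two scalings of Step~1 have genuinely different rates.
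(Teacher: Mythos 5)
Your overall blueprint is the right one, and it is essentially Shen's own argument (the paper only cites \cite[Lemma~1.4]{sh} and does not reprove it): extract the two one-sided scaling estimates for $N(x,r):=r^{2-n}\int_{B(x,r)}V$ from reverse H\"older and doubling, pin $\rho(x)$ down by $N(x,\rho(x))\sim 1$, pass from $y$ to $x$ by ball inclusion, and obtain the asymmetric upper exponent $\frac{N_0}{N_0+1}$ by swapping $x$ and $y$ in the lower bound. Step~1 and the duality algebra in Step~3 are correct as you state them.

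Step~2, however, has a structural flaw, not merely a bookkeeping issue. You apply the inclusion $B(y,s)\st B(x,s+t)$ at the final small scale $s$, obtaining $N(y,s)\le\lf(\frac{s+t}{s}\r)^{n-2}N(x,s+t)$, and then bound only $N(x,s+t)$; the prefactor $\lf(\frac{s+t}{s}\r)^{n-2}$ is never compensated. In the relevant regime $t\gs\rho(x)$ one necessarily has $s\ls\rho(x)\le t$, so $\lf(\frac{s+t}{s}\r)^{n-2}\gs(t/s)^{n-2}\gg1$, while $N(x,s+t)\sim N(x,t)$ is itself $\gs(t/\rho(x))^{2-n/q}\ge1$; hence the right-hand side is $\gg1$ for \emph{every} admissible $s$, and no choice of $c$ and $N_0$ produces $N(y,s)\le1$. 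Concretely, with your $s=c\rho(x)\lf[1+\frac{t}{\rho(x)}\r]^{-N_0}$ and $u:=t/\rho(x)\ge1$ the bound reads $N(y,s)\ls c^{2-n}\,u^{(N_0+1)(n-2)+\kappa}$ with $\kappa:=\log_2C_0-(n-2)>0$, whose exponent is strictly positive regardless of $N_0$. Already $V\equiv1$ defeats it: $\rho$ is a constant, yet $\lf(\frac{s+t}{s}\r)^{n-2}N(x,s+t)=\omega_n(s+t)^n/s^{n-2}\to\infty$ as $t\to\infty$ for every fixed $s\le\rho(x)$, so the estimate never certifies what is plainly true. The missing ingredient is a reverse H\"older scaling \emph{at the point $y$}: apply the inclusion at the comparable scale $R\sim\max\{t,\rho(x)\}$ (so the volume ratio $(R+t)/R$ is bounded and, by doubling at $x$, $N(y,R)\ls(1+t/\rho(x))^{\kappa}$), and then descend from $R$ to $s$ via $N(y,s)\le C(s/R)^{2-n/q}N(y,R)$. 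The gain $(s/R)^{2-n/q}$ closes the argument, and solving for $s$ produces exactly $N_0=\frac{\log_2[C_02^{n/q-n}]}{2-n/q}$, in agreement with Remark~\ref{rem-7}.
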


\begin{remark}\label{rem-7}
Let $C_0$ be as in \eqref{eq-db}. By carefully checking the proof of \cite[Lemma 1.4]{sh},
we find that $N_0$ in Lemma \ref{lem aux} could be any constant such that
$N_0\in(\max\{\frac{\log_2{[C_02^{(n/q)-n}]}}{2-(n/q)},\,0\},\,\fz)$.
\end{remark}

The following estimate for the kernel of $e^{-tL}$ can be found in \cite[Theorem 1]{Ku00}
(see also \cite[Proposition 2.3]{ly16}).
\begin{lemma}[\cite{Ku00}]\label{lem-8}
Let $L$ be as in \eqref{eq-op}, $n\geq 3$ and $\rho$ be as in \eqref{eq aux} with $V\in RH_q(\rn)$
and $q\in(n/2,\,\fz)$.
Assume that $K_t(\cdot,\,\cdot)$ is the integral kernel
of the heat semigroup $\{e^{-tL}\}_{t\geq 0}$ generated by $L$.
Then, for any given $k\in\zz_+$ and $N\in(0,\,\fz)$,
there exist positive constants $C_{(k,\,N)}$ and $c_{(k)}$ such that,
for any $t\in(0,\,\fz)$ and $(x,y)\in\rn\times\rn$,
\begin{equation*}
\lf|\frac{\partial^k}{\partial t^k}K_t(x,\,y)\r|\le\frac{C_{(k,\,N)}}{t^{n/2+k}}e^{-c_{(k)}\frac{|x-y|^2}{t}}
\lf[1+\frac{\sqrt{t}}{\rho(x)}+\frac{\sqrt{t}}{\rho(y)}\r]^{-N}.
\end{equation*}
\end{lemma}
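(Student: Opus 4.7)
The plan is to prove the estimate in three stages: a bare Gaussian upper bound, then the $\rho$-decay factor, and finally the time derivatives.

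First, I would establish the pointwise bound
\begin{align*}
0\le K_t(x,y)\le \frac{C}{t^{n/2}}\,e^{-c|x-y|^2/t}
\end{align*}
for $k=0$ and without the $\rho$-decay factor. Since $V\ge 0$, the Feynman--Kac representation $K_t(x,y)=K_t^0(x,y)\,\mathbb{E}_{x\to y}^t[\exp(-\int_0^t V(\omega_s)\,ds)]$, with $K_t^0$ the free Gauss kernel, forces the expectation into $[0,1]$ and gives the bound.

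Second, to produce the factor $[1+\sqrt t/\rho(x)+\sqrt t/\rho(y)]^{-N}$, I would iterate the Duhamel formula
\begin{align*}
K_t(x,y)=K_t^0(x,y)-\int_0^t\int_\rn K_{t-s}^0(x,z)\,V(z)\,K_s(z,y)\,dz\,ds.
\end{align*}
Each iteration introduces an additional factor of $V(z)$ integrated against a Gaussian bump of scale $\sqrt t$, and the key Shen-type inequality
\begin{align*}
\frac{1}{r^{n-2}}\int_{B(x,r)}V(y)\,dy\ls \lf(\frac{r}{\rho(x)}\r)^{2-n/q}\quad\text{for } r\geq\rho(x),
\end{align*}
a direct consequence of $V\in RH_q$ and the definition \eqref{eq aux} of $\rho$, contributes one power of $\sqrt t/\rho$ per step; after $N$ steps the claimed polynomial decay appears. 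To keep the Gaussian factor $e^{-c|x-y|^2/t}$ alive across the iteration, I would split $e^{-|x-y|^2/t}=e^{-|x-y|^2/(2t)}e^{-|x-y|^2/(2t)}$, absorbing one half into the convolution estimate and retaining the other at the price of a slightly smaller constant $c_{(k)}$; Lemma \ref{lem aux} lets me move freely between $\rho(x)$, $\rho(z)$, and $\rho(y)$ at intermediate points.

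Third, the derivative estimate for $k\geq 1$ follows from the analyticity of $\{e^{-zL}\}$ on a sector of the right half-plane. Once the $k=0$ estimate is extended to complex time $z$ in a smaller sector (this costs only a change of constants, since $L$ is positive self-adjoint and the ellipticity estimates survive off the real axis), Cauchy's formula
\begin{align*}
\frac{\partial^k}{\partial t^k}K_t(x,y)=\frac{k!}{2\pi i}\oint_{|z-t|=t/2}\frac{K_z(x,y)}{(z-t)^{k+1}}\,dz
\end{align*}
yields the extra factor $t^{-k}$.

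The main obstacle will be the combinatorial bookkeeping in the iterated Duhamel expansion: one must split each intermediate $z$-integration into dyadic annuli about $x$, apply Shen's control of $\int_B V$ only on those annuli where $r\geq\rho(x)$ (the small-$r$ regime being handled by the already proven unweighted Gaussian estimate), and make sure that after $N$ iterations the accumulated constants remain bounded and the Gaussian tail is only infinitesimally dissipated. This, together with the interplay between $\rho(x)$, $\rho(z)$ and $\rho(y)$ governed by Lemma \ref{lem aux}, is where the bulk of the technical work lies.
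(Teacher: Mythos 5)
The paper offers no proof of this lemma: it is taken verbatim from Kurata \cite[Theorem 1]{Ku00} (with a pointer to \cite[Proposition 2.3]{ly16}), so there is no in-paper argument for you to match. On its own merits, however, your outline has a structural flaw in the middle step.

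Your first step (Feynman--Kac gives $0\le K_t\le K_t^0$) and your third step (Cauchy's formula over a circle $|z-t|=t/2$ to trade analyticity in $t$ for the factor $t^{-k}$) are sound in principle, although for the third one you would still need to re-derive the full estimate including the $\rho$-factor for complex time $z$ in a sector, which is not free. The problem is the second step. The Shen-type inequality you invoke is stated in the wrong direction: for $r\ge\rho(x)$ the consequence of $V\in RH_q$ and the definition of $\rho$ is the \emph{lower} bound
$\frac{1}{r^{n-2}}\int_{B(x,r)}V\,dy\gs(r/\rho(x))^{2-n/q}$
(Shen's Lemma 1.2 applied with the smaller radius equal to $\rho(x)$), not an upper bound. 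This direction matters: to make $K_t$ small when $\sqrt t\gg\rho(x)$ you need $V$ to be provably \emph{large} at scale $\sqrt t$ so that the Feynman--Kac weight $e^{-\int_0^t V(\omega_s)\,ds}$ is forced down. An upper bound on $\int V$ tells you nothing in that direction.

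More seriously, iterating Duhamel
$K_t=K_t^0-\int_0^t\int K_{t-s}^0\,V\,K_s$
does not, by itself, ``contribute one power of $\sqrt t/\rho$ per step.'' The subtracted term is nonnegative, so Duhamel reproves $K_t\le K_t^0$, but to improve that into $K_t\le K_t^0\cdot(\rho(x)/\sqrt t)^N$ you must bound the subtracted term from \emph{below}, which requires a lower bound on the intermediate $K_s(z,y)$ — exactly the object you are trying to estimate, a circularity your sketch does not break. The way this is actually resolved (e.g.\ in Kurata and in Dziuba\'nski--Zienkiewicz) is via a different mechanism: one first proves a mass-decay estimate of the form $\int_{\rn}K_t(x,y)\,dy\le 1-\delta$ at the critical scale $\sqrt t\sim\rho(x)$, using a Fefferman--Phong inequality together with the lower bound on $\int_B V$ and the fact that the Brownian bridge spends a definite fraction of time in a ball of radius $C\sqrt t$ about $x$; then the semigroup (Chapman--Kolmogorov) property iterates this into geometric (in fact better than polynomial) decay in $\sqrt t/\rho(x)$, which is afterwards symmetrized in $x$ and $y$ via Lemma \ref{lem aux} and combined with the Gaussian bound. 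That replacement of ``iterate Duhamel with upper bounds'' by ``mass-decay at scale $\rho$ plus the semigroup property'' is a missing idea, not mere bookkeeping.
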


Let $\tz\in[0,\,\fz)$.
The \emph{Hardy--Littlewood type maximal operator $M_\tz$}, associated to $L$,
is defined by setting, for any $f\in L_{\loc}^1(\rn)$ and $x\in\rn$,
\begin{align}\label{eq-fm}
M_\tz(f)(x):=\sup_{B\ni x}\frac{1}{\Psi_\tz(B)|B|}\int_B|f(y)|\,dy,
\end{align}
where the supremum is taken over all balls $B$ of $\rn$
containing $x$ and $\Psi_\tz(B)$ is as in \eqref{eq-psi}.

The following lemma is a particular case of \cite[Theorem 1.3]{lrw16},
which establishes the quantitative estimate for $M_\tz$.
\begin{lemma}[\cite{lrw16}]\label{lem-10}
Let $n\geq 3$, $\rho$ be as in \eqref{eq aux} with $V\in RH_q(\rn)$ and $q\in(n/2,\,\fz)$,
$\tz\in[0,\,\fz)$, $p\in(1,\,\fz)$ and $1/p+1/p'=1$.
Then there exists a positive constant $C$ such that, for any
$w\in A_{p}^{\rho,\,\eta}(\rn)$, with $\eta:=\tz/p'$, and $f\in L^p(w)$,
\begin{align*}
\lf\|M_\tz(f)\r\|_{L^p(w)}\le C[w]_{A_p^{\rho,\,\eta}(\rn)}^{\frac{1}{p-1}}\|f\|_{L^p(w)}.
\end{align*}
\end{lemma}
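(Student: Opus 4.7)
The plan is to adapt Buckley's sharp $A_p$-argument for the Hardy--Littlewood maximal operator \cite{b93} to the $\rho$-localized setting, exploiting the identity $\Psi_\tz(B) = [\Psi_\eta(B)]^{p'}$ that is forced by the relation $\eta = \tz/p'$ and that couples the operator-side normalization with the weight-side normalization.

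First, I would prove the sharp weak-type estimate: for any $\lambda\in(0,\,\fz)$,
\[
w\lf(\lf\{x\in\rn:\ M_\tz(f)(x) > \lambda\r\}\r) \le C\,[w]_{A_p^{\rho,\,\eta}(\rn)}\,\lambda^{-p}\,\|f\|_{L^p(w)}^p.
\]
For each point of the level set, one chooses a ball $B \ni x$ with $\Psi_\tz(B)|B| < \lambda^{-1}\int_B |f|$; a Vitali-type selection extracts a pairwise disjoint subfamily $\{B_i\}_i$ whose fixed dilates cover the level set. With $\sigma := w^{-1/(p-1)}$, H\"older's inequality on each $B_i$ gives
\[
\lambda < \frac{1}{\Psi_\tz(B_i)|B_i|}\int_{B_i}|f| \le \frac{\sigma(B_i)^{1/p'}}{\Psi_\tz(B_i)|B_i|}\lf(\int_{B_i}|f|^p w\r)^{1/p},
\]
while a rearrangement of the defining inequality of $[w]_{A_p^{\rho,\,\eta}(\rn)}$ produces
\[
\frac{w(B)^{1/p}\,\sigma(B)^{1/p'}}{\Psi_\eta(B)|B|} \le [w]_{A_p^{\rho,\,\eta}(\rn)}^{1/p}.
\]
Combining these with $\Psi_\eta(B)/\Psi_\tz(B) = \Psi_\tz(B)^{-1/p}\le 1$ delivers $\lambda^p\,w(B_i) \le [w]_{A_p^{\rho,\,\eta}(\rn)}\int_{B_i}|f|^p w$; summation over the disjoint family, together with the $\Psi_\eta$-doubling property of $w$ (inherited from the $A_p^{\rho,\,\eta}(\rn)$-condition) used to pass from $B_i$ to its enlargement, produces the claimed weak-type bound.

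Next, to upgrade the weak-type exponent $[w]^{1/p}$ to the sharp strong-type exponent $[w]^{1/(p-1)}$, I would invoke the open-ended (self-improvement) property of $A_p^{\rho,\,\eta}(\rn)$: there exists $\varepsilon > 0$, quantitatively controlled by $[w]_{A_p^{\rho,\,\eta}(\rn)}$, such that $w \in A_{p-\varepsilon}^{\rho,\,\eta}(\rn)$ with a corresponding bound on its new constant---a fact established for $A_p^{\rho,\,\tz}(\rn)$ in \cite{bhs11} in parallel to the classical Muckenhoupt case. Applying the weak-type estimate at the exponent $p - \varepsilon$ and interpolating (via Marcinkiewicz) against the trivial $L^\infty$-bound for $M_\tz$, with constants tracked exactly as in Buckley's \cite{b93} original computation, yields the asserted strong-type bound.

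The principal obstacle is the quantitative bookkeeping through the self-improvement/interpolation step, ensuring that the final exponent of $[w]_{A_p^{\rho,\,\eta}(\rn)}$ is precisely $1/(p-1)$ rather than a larger power. The $\rho$-localization is handled uniformly thanks to the identity $\Psi_\tz = \Psi_\eta^{p'}$ and the slow variation of $\rho$ guaranteed by Lemma \ref{lem aux}; however, the precise quantitative form of the reverse-H\"older/open-ended property for $A_p^{\rho,\,\eta}(\rn)$ must be carefully verified in the localized setting to reproduce Buckley's classical numerology.
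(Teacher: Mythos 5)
The paper itself does not prove this lemma; it is quoted verbatim from \cite[Theorem~1.3]{lrw16}, so there is no in-paper argument to compare your sketch against. Evaluated on its own terms, your outline reproduces the correct per-ball Hölder computation and the useful identity $\Psi_\tz(B)=[\Psi_\eta(B)]^{p'}$, but it has a gap in the weak-type step that cannot be classified as bookkeeping.

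The gap: in the Vitali step you must pass from the disjoint balls $B_i$ to their fixed dilates $5B_i$, and you propose to do so via a ``$\Psi_\eta$-doubling property of $w$.'' But $A_p^{\rho,\,\eta}(\rn)$ weights are \emph{not} classically doubling; the correct dilation estimate (this is exactly Lemma~\ref{lem-11} of the paper, stated there as a consequence of the very lemma you are trying to prove) is of the form
\[
w(5B)\;\lesssim\;[w]_{A_p^{\rho,\,\eta}(\rn)}^{\,p/(p-1)}\,\Bigl[1+\frac{5r_B}{\rho(x_B)}\Bigr]^{p\tz}\,w(B),
\]
with a blow-up factor that is unbounded as $r_B/\rho(x_B)\to\fz$. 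Your per-ball computation saves only a single factor $\Psi_\tz(B)^{-1}=[1+r_B/\rho(x_B)]^{-\tz}$, leaving a factor on the order of $\Psi_\tz(B)^{p-1}$ unabsorbed, so the summation over the Vitali family does not close for balls with $r_B$ much larger than $\rho(x_B)$. This is precisely the regime in which $A_p^{\rho,\,\eta}(\rn)$ is strictly larger than $A_p(\rn)$, so the failure is structural, not cosmetic.

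The standard remedy in the Schr\"odinger setting --- and the architecture used in \cite{bhs11} and in this paper's own Lemma~\ref{lem-2} for the $\rho$-localized operators --- is a local--global split organized around the critical-ball cover of Lemma~\ref{lem-1}: treat separately the balls with $r_B\lesssim\rho(x_B)$, where one extends $w|_{\wz B_j}$ to a genuine $A_p(\rn)$ weight via Lemma~\ref{lem-7} and invokes Buckley's classical bound on each critical ball, and the balls with $r_B\gg\rho(x_B)$, where the damping $\Psi_\tz(B)^{-1}$ makes $M_\tz$ pointwise controllable without any covering lemma. Your proposal skips this decomposition entirely. I would also flag the self-improvement step: a quantitative ``$w\in A_p^{\rho,\,\eta}(\rn)\Rightarrow w\in A_{p-\varepsilon}^{\rho,\,\eta}(\rn)$ with controlled new constant'' is not actually established in \cite{bhs11} in the sharp form Buckley's argument requires, and lowering $p$ shifts the target parameter $\eta=\tz/p'$ downward, so the class one lands in after self-improvement must be matched carefully against the class the weak-type estimate needs. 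Neither of these points is addressed beyond an acknowledgement that bookkeeping is required.
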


\begin{remark}\label{rem-6}
Recall that the classical \emph{Hardy-Littlewood maximal operator} $M$ is defined
by setting, for any $f\in L^1_{\loc}(\rn)$ and $x\in\rn$,
\begin{align}\label{eq-0.4}
M(f)(x):=\sup_{B\st\rn}\frac{1}{|B|}\int_B|f(y)|\,dy,
\end{align}
where the supremum is taken over all balls $B$ of $\rn$ containing $x$.
If $\tz=0$ and $p\in(1,\,\fz)$, then, in this case,
the conclusion of Lemma \ref{lem-10} just becomes that
there exists a positive constant $C$ such that, for any $w\in A_p(\rn)$ and $f\in L^p(w)$,
\begin{align}\label{eq-0.3}
\|M(f)\|_{L^p(w)}\le C[w]_{A_p(\rn)}^{\frac{1}{p-1}}\|f\|_{L^p(w)}.
\end{align}
Observe that \eqref{eq-0.3} is just the sharp weighted bound for $M$ obtained by Buckley \cite{b93}.
\end{remark}

We have the following quantitative version of the extrapolation theorem for $A_p^{\rho,\,\tz}(\rn)$ weights.
\begin{lemma}\label{lem-4}
Let $n\geq 3$, $\rho$ be as in \eqref{eq aux} with $V\in RH_q(\rn)$ and $q\in(n/2,\,\fz)$,
$p_0\in(1,\,\fz)$, $\gamma\in[0,\,\fz)$ and $T$ be an operator defined on $C_c^\fz(\rn)$.
Suppose that there exist positive constants $c$ and $\eta$ such that,
for any $w\in A_{p_0}^{\rho,\,\gamma}(\rn)$ and $f\in L^{p_0}(w)$,
\begin{align*}
\lf\|T(f)\r\|_{L^{p_0}(w)}\le c[w]^\eta_{A_{p_0}^{\rho,\,\gamma}(\rn)}\|f\|_{L^{p_0}(w)}.
\end{align*}
Then, for any $p\in(1,\,\fz)$, there exists a positive constant $C$ such that,
for any $w\in A_p^{\rho,\,\theta}(\rn)$,
$\tz:=\gamma/p_0'$ if $p>p_0$ or $\tz:=\gamma/p_0$ if $p<p_0$, and $f\in L^p(w)$,
\begin{align*}
\lf\|T(f)\r\|_{L^p(w)}\le C[w]^{\eta\max\{1,\,\frac{p_0-1}{p-1}\}}_{A_p^{\rho,\,\tz}(\rn)}\|f\|_{L^p(w)}.
\end{align*}
\end{lemma}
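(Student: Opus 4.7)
The plan is a quantitative Rubio de Francia extrapolation adapted to the Schr\"odinger weight class $A_p^{\rho,\tz}(\rn)$, replacing the classical Hardy--Littlewood maximal operator by the $\rho$-localized operator $M_\tz$ of \eqref{eq-fm}, whose sharp quantitative $L^p(w)$ bound is provided by Lemma \ref{lem-10}. The argument splits according to whether $p>p_0$ or $p<p_0$. In both cases I would use the Rubio de Francia iterates
\begin{align*}
\mathcal{R}(h):=\sum_{k=0}^{\fz}\frac{M_{\tilde\tz}^{\,k}(h)}{\bigl(2\,\|M_{\tilde\tz}\|_{L^s(v)\to L^s(v)}\bigr)^k},
\end{align*}
for indices $s$, $\tilde\tz$ and a weight $v$ chosen according to the case; standard manipulations then yield $h\le \mathcal{R}(h)$, $\|\mathcal{R}(h)\|_{L^s(v)}\le 2\|h\|_{L^s(v)}$ and the pointwise bound $M_{\tilde\tz}(\mathcal{R}(h))\le 2\,\|M_{\tilde\tz}\|_{L^s(v)\to L^s(v)}\,\mathcal{R}(h)$, so that $\mathcal{R}(h)$ satisfies an $A_1$-type inequality adapted to $\rho$ whose constant is controlled by $[v]_{A_s^{\rho,\tilde\tz/s'}(\rn)}^{1/(s-1)}$ via Lemma \ref{lem-10}.

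For $p>p_0$, set $r:=p/p_0$ and use the $L^r(w)$-duality
\begin{align*}
\|T(f)\|_{L^p(w)}^{p_0}=\sup_{h\ge 0,\ \|h\|_{L^{r'}(w)}\le 1}\int_{\rn}|T(f)(x)|^{p_0}\,h(x)\,w(x)\,dx.
\end{align*}
Taking $s:=r'$, $v:=w$ and choosing $\tilde\tz$ so that $w\in A_{r'}^{\rho,\tilde\tz/r}(\rn)$ (which is guaranteed by $w\in A_p^{\rho,\gamma/p_0'}(\rn)$ together with Remark \ref{rem-3}(ii)), I would replace $h$ by $\mathcal{R}(h)$ and apply the hypothesis on $T$ to the weight $W:=\mathcal{R}(h)\,w$. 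A Jones-type combination tailored to the class $A_p^{\rho,\tz}(\rn)$ should give $W\in A_{p_0}^{\rho,\gamma}(\rn)$ with $[W]_{A_{p_0}^{\rho,\gamma}(\rn)}$ essentially dominated by $[w]_{A_p^{\rho,\tz}(\rn)}$; combined with H\"older's inequality on the factor $\int |f|^{p_0}\mathcal{R}(h)\,w\,dx$ and $\|\mathcal{R}(h)\|_{L^{r'}(w)}\le 2$, this yields the claim with exponent $\eta$ on $[w]_{A_p^{\rho,\tz}(\rn)}$, in agreement with $\eta\,\max\{1,(p_0-1)/(p-1)\}=\eta$ in this case.

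The case $p<p_0$ is treated analogously by dualizing on $L^{(p_0/p)'}(\sigma)$ with $\sigma:=w^{1-p'}$, together with the duality identity $[\sigma]_{A_{p'}^{\rho,\,\cdot}(\rn)}\sim [w]_{A_p^{\rho,\,\cdot}(\rn)}^{1/(p-1)}$ for the Schr\"odinger weight class. Since Lemma \ref{lem-10} applied at $q=(p_0/p)'$ yields $\|M_{\tilde\tz}\|_{L^q(\sigma)\to L^q(\sigma)}\ls [\sigma]^{1/(q-1)}$, the final exponent of $[w]_{A_p^{\rho,\tz}(\rn)}$ picks up the announced extra factor $(p_0-1)/(p-1)>1$. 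The main obstacle in both cases will be the careful tracking of the $\rho$-parameters: one must choose $\tilde\tz$ so that simultaneously $M_{\tilde\tz}$ is bounded on the relevant Lebesgue space under only the hypothesis $w\in A_p^{\rho,\tz}(\rn)$, and the product weight $W$ lies in $A_{p_0}^{\rho,\gamma}(\rn)$ with constant controlled by $[w]_{A_p^{\rho,\tz}(\rn)}$. The relations $\tz=\gamma/p_0'$ (for $p>p_0$) and $\tz=\gamma/p_0$ (for $p<p_0$) are precisely what balances these two requirements through Definition \ref{def-1} and Lemma \ref{lem-10}.
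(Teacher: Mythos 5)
Your proposal is conceptually in the right family (Rubio de Francia iteration and duality, as in \cite{dgpp05,lmpt10}), but there is a genuine gap in the iteration operator you choose, and it is precisely the point the paper's Lemma \ref{lem-5} is designed to handle. You iterate the linear operator $M_{\tilde\tz}$ on $L^{s}(v)$ with $s:=r'$, $v:=w$ (so $r=p/p_0$ in the case $p>p_0$). For this you need $M_{\tilde\tz}$ bounded on $L^{r'}(w)$, which by Lemma \ref{lem-10} requires $w\in A_{r'}^{\rho,\,\cdot}(\rn)$. But the hypothesis is $w\in A_p^{\rho,\tz}(\rn)$, and since $r'=p/(p-p_0)$ one has $r'<p$ whenever $p>p_0+1$; in that range $A_p^{\rho,\tz}(\rn)\not\subset A_{r'}^{\rho,\,\cdot}(\rn)$, so the linear $\mathcal{R}$ is not even well-defined as a bounded operator on the space you need. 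Invoking Remark \ref{rem-3}(ii) does not help here: that remark moves the $\tz$ parameter, not the $p$ parameter.

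The paper avoids this by iterating the \emph{nonlinear} operator
$R(g):=\bigl[M_{\gamma}(g^{1/s}w)\,w^{-1}\bigr]^s$ with $s:=\frac{r-r_0}{r-1}$
(Lemma \ref{lem-5}). Its boundedness on $L^{r/(r-r_0)}(w)$ is not reduced to boundedness of $M_\gamma$ on $L^{r'}(w)$ but on $L^{r'}\bigl(w^{-1/(r-1)}\bigr)$, and $w\in A_r^{\rho,\tz}(\rn)$ \emph{does} imply $w^{-1/(r-1)}\in A_{r'}^{\rho,\tz}(\rn)$ by the duality identity \eqref{eq-1.4}. The nonlinear form also does the second job you leave to a ``Jones-type combination'' : it produces $G$ such that $G^{1/s}w$ (not $G$ itself) is an $A_1^{\rho,\gamma}$-type weight with constant $\ls[w]_{A_r^{\rho,\tz}(\rn)}$, and then the algebraic identity $Gw=(G^{1/s}w)^s\,w^{1-s}$ together with the H\"older decomposition in \eqref{eq-1.6} gives $[Gw]_{A_{r_0}^{\rho,\gamma}(\rn)}\ls[w]_{A_r^{\rho,\tz}(\rn)}$. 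With your linear $\mathcal{R}$, you would only know that $\mathcal{R}(h)$ itself is of $A_1$-type, and the product of an $A_1^{\rho,\,\cdot}$ weight with an $A_p^{\rho,\tz}(\rn)$ weight does not land in the smaller class $A_{p_0}^{\rho,\gamma}(\rn)$ with the stated constant -- the exponent $s$ in the nonlinear construction is exactly what balances the factorization. So the missing step ``should give $W\in A_{p_0}^{\rho,\gamma}(\rn)$'' is not a routine Jones-type observation; it is the crux and requires replacing your $\mathcal{R}$ by the paper's $R$.
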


\begin{remark}\label{rem-1}
\begin{enumerate}
\item[(i)]
We point out that Tang \cite[Theorems 3.1 and 3.2]{t14} also
obtained an extrapolation theorem for $A_p^{\rho,\,\tz}(\rn)$ weights
on weighted Lebesgue and Lorentz spaces.
Moreover, Bongioanni et al. \cite[Theorem 1]{bch13}
obtained an extrapolation theorem for a class of abstract weights,
which includes $A_p^{\rho,\,\tz}(\rn)$ weights as a special case,
on weighted Lebesgue spaces. However, neither of the above two extrapolation theorems
is quantitative.

\item[(ii)]
Lemma \ref{lem-4} can be written in terms of pairs of functions as follows.
Let $\mathcal{F}$ be a given family of pairs $(f,\,g)$
of non-negative measurable functions on $\rn$ and
$\rho$ as in \eqref{eq aux} with $V\in RH_q(\rn)$ and $q\in(n/2,\,\fz)$.
Suppose that there exist positive constants $c$ and $\eta$ such that,
for some fixed $p_0\in[1,\,\fz)$ and for any $w\in A_{p_0}^{\rho,\,\gamma}(\rn)$ with $\gamma\in[0,\,\fz)$,
\begin{align*}
\int_\rn [f(x)]^{p_0}w(x)\,dx\le c[w]^{p_0\eta}_{A_{p_0}^{\rho,\,\gamma}(\rn)}\int_\rn [g(x)]^{p_0}w(x)\,dx,
\quad\forall\ (f,\,g)\in\mathcal{F}.
\end{align*}
Then there exists a positive constant $C$ such that,
for any $p\in(1,\,\fz)$ and $w\in A_p^{\rho,\,\tz}(\rn)$
with $\tz:=\gamma/p_0'$ if $p>p_0$ or $\tz:=\gamma/p_0$ if $p<p_0$,
\begin{align*}
\int_\rn [f(x)]^{p}w(x)\,dx\le C[w]^{p\eta\max\{1,\,\frac{p_0-1}{p-1}\}}_{A_p^{\rho,\,\tz}(\rn)}
\int_\rn [g(x)]^{p}w(x)\,dx,
\quad\forall\ (f,\,g)\in\mathcal{F}.
\end{align*}
As usual, in the above two inequalities, we always assume that the left-hand side is finite.
\end{enumerate}
\end{remark}

The key to prove Lemma \ref{lem-4} is a version of the \emph{Rubio de Francia iteration algorithm}
for $A_p^{\rho,\,\tz}(\rn)$ weights (see Lemma \ref{lem-5} below).
Once it is established, the proof of Lemma \ref{lem-4}
is completely similar to that of \cite[Theorem 2.1]{lmpt10} (see also \cite[Theorem 1]{dgpp05})
and we omit the details.

\begin{lemma}\label{lem-5}
Let $n\geq 3$, $\rho$ be as in \eqref{eq aux} with $V\in RH_q(\rn)$ and $q\in(n/2,\,\fz)$,
$1\le r_0<r<\fz$, $\tz\in[0,\,\fz)$,
$w\in A_r^{\rho,\,\tz}(\rn)$ and $g\in L^{\frac{r}{r-r_0}}(w)$ be non-negative.
Then there exists a function $G$ such that
\begin{enumerate}
\item[i)] $g\le G$;
\item[ii)] $\|G\|_{L^{\frac{r}{r-r_0}}(w)}\le 2\|g\|_{L^{\frac{r}{r-r_0}}(w)}$;
\item[iii)] $Gw\in A_{r_0}^{\rho,\,\gamma}(\rn)$ with
$[Gw]_{A_{r_0}^{\rho,\,\gamma}(\rn)}\le c[w]_{A_r^{\rho,\,\tz}(\rn)}$,
\end{enumerate}
where $\gamma:=r\tz$ and $c$ is a positive constant independent of $w$.
\end{lemma}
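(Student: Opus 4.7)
The plan is to adapt the classical Rubio de Francia iteration algorithm to the $\rho$-localized setting, following the blueprint of the quantitative extrapolation theorem in \cite[Theorem 2.1]{lmpt10}. Setting $s:=r/(r-r_0)$ (so that $s'=r/r_0$), I would construct $G$ as a Neumann-type geometric series
$$G:=\sum_{k=0}^\fz\frac{\wz M^k g}{(2K)^k},$$
where $\wz M$ is an appropriately chosen sublinear maximal operator bounded on $L^s(w)$ with operator norm $K$. The whole construction is arranged so that the pointwise inequality $\wz M(G)\le 2K\,G$ that comes automatically from any such iteration can be combined with the $A_r^{\rho,\tz}$ hypothesis on $w$ to produce a bound on $[Gw]_{A_{r_0}^{\rho,\gamma}}$ that is \emph{linear} in $[w]_{A_r^{\rho,\tz}}$.

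There are two natural candidates for $\wz M$. The first is the $\rho$-localized maximal $M_\eta$ from \eqref{eq-fm}, with parameter $\eta:=r\tz/r_0$ chosen so that $\eta/s'=\tz$; Lemma \ref{lem-10} then gives $K\le c\,[w]_{A_r^{\rho,\tz}}^{1/(s-1)}$ (invoking the monotonicity $A_r^{\rho,\tz}\st A_s^{\rho,\tz}$, which follows by H\"older at least when $s\ge r$, i.e.\ when $r\le r_0+1$). The second is the $w$-weighted variant $M_\eta^w f(x):=\sup_{B\ni x}\frac{1}{\Psi_\eta(B)w(B)}\int_B|f|w$, which is bounded on $L^s(w)$ with constant depending only on $s$ and $n$ by a universal Vitali-type argument. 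In either case properties (i) and (ii) follow from standard Neumann-series calculations: (i) from the $k=0$ summand, using $g\ge 0$; (ii) from $\|\wz M^k g\|_{L^s(w)}\le K^k\|g\|_{L^s(w)}$ together with $\sum_k 2^{-k}=2$. Subadditivity of $\wz M$ then yields $\wz M(G)\le 2K\,G$ a.e.\ on $\rn$, which in turn controls a localized mean value of $G$ (or of $Gw$, depending on the choice of $\wz M$) over each ball $B\st\rn$ by $2K\,\Psi_\eta(B)\,\essinf_B G$ (or a weighted analogue).

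The technical heart of the proof is (iii). Combining the pointwise control above with the elementary bound $(Gw)^{-1/(r_0-1)}\le(\essinf_B G)^{-1/(r_0-1)}\,w^{-1/(r_0-1)}$ on $B$, the $\essinf_B G$ factors cancel when one expands the definition of $[Gw]_{A_{r_0}^{\rho,\gamma}}$, leaving an expression controlled by $w(B)$ and $\int_B w^{-1/(r_0-1)}$. The specific choice $\gamma:=r\tz$ (so $\Psi_\gamma=\Psi_\tz^r$) is designed so that the $\Psi$-factors across the two Muckenhoupt conditions align. The main obstacle will be reconciling the different exponents $r-1$ (appearing in $[w]_{A_r^{\rho,\tz}}$) and $r_0-1$ (in $[Gw]_{A_{r_0}^{\rho,\gamma}}$): a direct application of Jensen's inequality comparing $\int_B w^{-1/(r_0-1)}$ with $\int_B w^{-1/(r-1)}$ goes in the wrong direction, so this comparison requires a finer tool---either a uniform reverse-H\"older-type inequality for weights in $A_r^{\rho,\tz}$ applied to $w^{1-r'}$ (giving $(\frac{1}{|B|}\int_B w^{-1/(r_0-1)})^{r_0-1}\le c(\frac{1}{|B|}\int_B w^{-1/(r-1)})^{r-1}$ with universal $c$), or a dual reformulation of the iteration on $L^{s'}(w^{1-r'})$ in which the two exponents line up automatically. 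Once this final hurdle is cleared, assembling the estimates yields $[Gw]_{A_{r_0}^{\rho,\gamma}}\le c\,[w]_{A_r^{\rho,\tz}}$ with $c$ independent of $w$, as required.
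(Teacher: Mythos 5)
Your high-level framing is right: this is a Rubio de Francia iteration argument, and you correctly locate the genuine difficulty in part (iii), namely that the factor $\int_B w^{-1/(r_0-1)}$ appearing in $[Gw]_{A_{r_0}^{\rho,\gamma}(\rn)}$ does not directly compare to the factor $\int_B w^{-1/(r-1)}$ in $[w]_{A_r^{\rho,\tz}(\rn)}$ (Jensen goes the wrong way). But neither of your two candidate operators $\wz M$ resolves this, and the two repairs you sketch do not work: a uniform reverse H\"older inequality for $w^{1-r'}$ has a constant that itself depends on $[w]_{A_r^{\rho,\tz}(\rn)}$, which would destroy the linear dependence you need, and the ``dual reformulation'' is not developed. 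Moreover, your first candidate $M_\eta$ applied directly to $g$ only makes sense (via the monotonicity of the weight classes) when $r\le r_0+1$, which is not the generality required.

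The missing idea is that the iterated operator should not be a plain maximal operator but the \emph{conjugated, power-modified} one
\begin{equation*}
R(g):=\lf[M_{\gamma}\lf(g^{1/s}w\r)w^{-1}\r]^s,\qquad s:=\frac{r-r_0}{r-1}\in(0,1],\quad \gamma:=r\tz,
\end{equation*}
which is the device the paper uses. With this choice, the $L^{r/(r-r_0)}(w)$-boundedness of $R$ reduces (after the change of density) to the $L^{r'}(w^{-1/(r-1)})$-boundedness of $M_\gamma$, which follows from the duality $[w^{-1/(r-1)}]_{A_{r'}^{\rho,\tz}(\rn)}=[w]_{A_r^{\rho,\tz}(\rn)}^{r'/r}$ together with the quantitative bound for $M_\gamma$ (Lemma \ref{lem-10}), giving $\|R\|\ls[w]_{A_r^{\rho,\tz}(\rn)}^s$. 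Crucially, the pointwise self-improving estimate $R(G)\le 2\|R\|G$ then reads $M_\gamma(G^{1/s}w)\ls[w]_{A_r^{\rho,\tz}(\rn)}\,G^{1/s}w$, i.e., it gives a lower bound $G(x)\gs[w(x)]^{-s}[w]_{A_r^{\rho,\tz}(\rn)}^{-s}\{\frac{1}{\Psi_\gamma(B)|B|}\int_B G^{1/s}w\}^s$. When this is substituted into $[Gw]^{-1/(r_0-1)}$, the exponent identity $\frac{s-1}{r_0-1}=-\frac{1}{r-1}$ is exactly what converts the unwanted power $w^{-1/(r_0-1)}$ into $w^{-1/(r-1)}$; the factors $\{\frac{1}{\Psi_\gamma(B)|B|}\int_B G^{1/s}w\}^{\pm s}$ then cancel after applying H\"older with exponents $1/s$ and $1/(1-s)$ to the first average. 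Using $r_0-1=(r-1)(1-s)$ and $[w]_{A_r^{\rho,\gamma}(\rn)}\le[w]_{A_r^{\rho,\tz}(\rn)}$ one arrives at $[Gw]_{A_{r_0}^{\rho,\gamma}(\rn)}\ls[w]_{A_r^{\rho,\tz}(\rn)}^s[w]_{A_r^{\rho,\tz}(\rn)}^{1-s}=[w]_{A_r^{\rho,\tz}(\rn)}$. This algebraic cancellation is exactly the point your proposal lacks, and without it the exponent mismatch you identified remains unresolved.
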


\begin{proof}
Let $s:=\frac{r-r_0}{r-1}$.
By the fact that $1\le r_0<r<\fz$, we find that $s\in(0,\,1]$.
For any $w$ and $g$ as in this lemma, let
\begin{align*}
R(g):=\lf[M_{\gamma}\lf(g^{1/s}w\r)w^{-1}\r]^s,
\end{align*}
where $\gamma\in(0,\,\fz)$ is fixed later and $M_\gamma$ is as in \eqref{eq-fm}
with $\tz=\gamma$ therein.
Then we have
\begin{align}\label{eq-1.3}
\int_\rn [R(g)(x)]^{\frac{r}{r-r_0}}w(x)\,dx
&=\int_\rn\lf[M_{\gamma}\lf(g^{1/s}w\r)(x)w^{-1}(x)\r]^{\frac{r}{r-1}}w(x)\,dx\\
&=\int_\rn\lf[M_{\gamma}\lf(g^{1/s}w\r)(x)\r]^{r'}[w(x)]^{-\frac{1}{r-1}}\,dx.\noz
\end{align}
By the fact that $w\in A_r^{\rho,\,\tz}(\rn)$ and Definition \ref{def-1},
we know that $w^{-\frac{1}{r-1}}\in A_{r'}^{\rho,\,\tz}(\rn)$ and
\begin{equation}\label{eq-1.4}
[w]_{A_r^{\rho,\,\tz}(\rn)}=\lf[w^{-\frac1{r-1}}\r]_{A_{r'}^{\rho,\,\tz}(\rn)}^{\frac{r}{r'}}.
\end{equation}
From this and Lemma \ref{lem-10}, we deduce that
\begin{align*}
\int_\rn\lf[M_{\gamma}\lf(g^{1/s}w\r)(x)\r]^{r'}[w(x)]^{-\frac{1}{r-1}}\,dx
&\ls[w^{-\frac{1}{r-1}}]_{A_{r'}^{\rho,\,\tz}(\rn)}^\frac{r'}{r'-1}
\int_\rn \lf\{[g(x)]^{1/s}w(x)\r\}^{r'}[w(x)]^{-\frac{1}{r-1}}\,dx\\
&\sim[w^{-\frac{1}{r-1}}]_{A_{r'}^{\rho,\,\tz}(\rn)}^r\int_\rn [g(x)]^{\frac{r}{r-r_0}}w(x)\,dx,
\end{align*}
where we fix $\gamma=r\tz$.
This, combined with \eqref{eq-1.3} and \eqref{eq-1.4}, implies that
\begin{align}\label{eq-1.5}
\|R(g)\|_{L^{\frac{r}{r-r_0}}(w)}
\ls[w]_{A_{r}^{\rho,\,\tz}(\rn)}^s\|g\|_{L^{\frac{r}{r-r_0}}(w)}.
\end{align}
Denote the operator norm $\|R\|_{L^{\frac{r}{r-r_0}}(w)\to L^{\frac{r}{r-r_0}}(w)}$ simply by $\|R\|$.
Then, by \eqref{eq-1.5}, we find that $\|R\|\ls[w]^s_{A^{\rho,\,\tz}_r(\rn)}$.
For any $w$ and $g$ as in this lemma, define
$$G:=\sum_{k=0}^\fz\frac{R^k(g)}{2^k\|R\|^k},$$
where $R^0:=I$ is the identity operator. It is easy to see that
$g\le G$ and
\begin{align*}
\|G\|_{L^{\frac{r}{r-r_0}}(w)}\le\sum_{k=0}^\fz\frac{\|R\|^k\|g\|_{L^{\frac{r}{r-r_0}}(w)}}{2^k\|R\|^k}
=2\|g\|_{L^{\frac{r}{r-r_0}}(w)}.
\end{align*}
Hence, (i) and (ii) hold true.

Next, we prove (iii), namely, for any $w\in A_r^{\rho,\,\tz}(\rn)$ and $G$ as above,
$[Gw]_{A_{r_0}^{\rho,\,\gamma}(\rn)}\ls[w]_{A_r^{\rho,\,\tz}(\rn)}$.
Indeed, for any $w\in A_r^{\rho,\,\tz}(\rn)$ and $G$ as above, we have
\begin{align}\label{eq-1.6x}
[Gw]_{A_{r_0}^{\rho,\,\gamma}(\rn)}
=\sup_{B\st\rn}\lf[\frac{1}{\Psi_\gamma(B)|B|}\int_B G(y)w(y)\,dy\r]
\lf\{\frac{1}{\Psi_\gamma(B)|B|}\int_B[G(y)w(y)]^{-\frac{1}{r_0-1}}\,dy\r\}^{r_0-1}.
\end{align}
For the first factor of the right-hand side of \eqref{eq-1.6x},
applying the H\"{o}lder inequality with exponents $1/s$ and $(1/s)'=\frac{1}{1-s}$,
we obtain
\begin{align}\label{eq-1.6}
\frac{1}{\Psi_\gamma(B)|B|}\int_B G(y)w(y)\,dy
\le\lf\{\frac{1}{\Psi_\gamma(B)|B|}[G(y)]^{1/s}w(y)\,dy\r\}^s
\lf\{\frac{1}{\Psi_\gamma(B)|B|}w(y)\,dy\r\}^{1-s}.
\end{align}

Next, we estimate the second factor of the right-hand side of \eqref{eq-1.6x}.
By the above definition of $G$, we have
\begin{align*}
R(G)=\sum_{k=0}^\fz\frac{R^{k+1}(g)}{2^k\|R\|^k}
=2\|R\|\sum_{k=0}^\fz\frac{R^{k+1}(g)}{2^{k+1}\|R\|^{k+1}}=2\|R\|(G-g)\le 2\|R\|G,
\end{align*}
namely,
$[M_{\gamma}(G^{1/s}w)w^{-1}]^s\le 2\|R\|G$.
This, together with the fact that $\|R\|\ls[w]^s_{A^{\rho,\,\tz}_r(\rn)}$, implies that
$M_{\gamma}(G^{1/s}w)\ls[w]_{A_r^{\rho,\,\tz}(\rn)}G^{1/s}w$.
Thus, for any $x\in\rn$ and any ball $B$ containing $x$, it holds true that
\begin{align*}
\frac{1}{\Psi_\gamma(B)|B|}\int_B [G(y)]^{1/s}w(y)\,dy\ls [w]_{A_r^{\rho,\,\tz}(\rn)}[G(x)]^{1/s}w(x),
\end{align*}
namely,
\begin{align*}
[w(x)]^{-s}[w]_{A_r^{\rho,\,\tz}(\rn)}^{-s}\lf\{\frac{1}{\Psi_\gamma(B)|B|}\int_{B}[G(y)]^{1/s}w(y)\,dy\r\}^s
\ls G(x).
\end{align*}
From this and the fact that $s=\frac{r-r_0}{r-1}$, we deduce that
\begin{align*}
&\lf\{\frac{1}{\Psi_\gamma(B)|B|}\int_B[G(y)w(y)]^{-\frac{1}{r_0-1}}\,dy\r\}^{r_0-1}\\
&\hs\ls\lf\{\frac{1}{\Psi_\gamma(B)|B|}\int_B[w(y)]^{-\frac{1-s}{r_0-1}}
[w]_{A_r^{\rho,\,\tz}(\rn)}^{\frac{s}{r_0-1}}
\lf[\frac{1}{\Psi_\gamma(B)|B|}\int_{B}\{G(z)\}^{1/s}w(z)\,dz\r]^{-\frac{s}{r_0-1}}\,dy\r\}^{r_0-1}\\
&\hs\ls[w]_{A_r^{\rho,\,\tz}(\rn)}^{s}\lf\{\frac{1}{\Psi_\gamma(B)|B|}\int_{B}[G(z)]^{1/s}w(z)\,dz\r\}^{-s}
\lf\{\frac{1}{\Psi_\gamma(B)|B|}\int_B[w(y)]^{-\frac{1}{r-1}}\,dy\r\}^{r_0-1}.
\end{align*}
By this, \eqref{eq-1.6}, \eqref{eq-1.6x} and the fact that
$[w]_{A_r^{\rho,\,\gamma}(\rn)}\le[w]_{A_r^{\rho,\,\tz}(\rn)}$ [see Remark \ref{rem-3}(ii)], we conclude that
\begin{align*}
[Gw]_{A_{r_0}^{\rho,\,\gamma}(\rn)}
&\ls\sup_{B\st\rn}[w]_{A_r^{\rho,\,\tz}(\rn)}^{s}
\lf[\frac{1}{\Psi_\gamma(B)|B|}\int_B w(y)\,dy\r]^{1-s}
\lf\{\frac{1}{\Psi_\gamma(B)|B|}\int_B[w(y)]^{-\frac{1}{r-1}}\,dy\r\}^{r_0-1}\\
&\sim [w]_{A_r^{\rho,\,\tz}(\rn)}^{s}[w]_{A_r^{\rho,\,\gamma}(\rn)}^{1-s}
\ls [w]_{A_r^{\rho,\,\tz}(\rn)}.
\end{align*}
This shows (iii) and hence finishes the proof of Lemma \ref{lem-5}.
\end{proof}

\section{Localized weights and operators}\label{s3}

In this section, we recall the $\rho$-localized operators and weights
introduced by Bongioanni et al. \cite{bhs11}.
Then we establish quantitative weighted estimates of
$\rho$-localized maximal functions and Littlewood--Paley operators.

We begin with recalling the \emph{radial maximal function} $R_{-\Delta}$
and the \emph{non-tangential maximal function} $M^\ast_{-\Delta}$,
associated to the Laplacian operator $-\Delta$,
which are defined, respectively, by setting, for any $f\in C_c^\fz(\rn)$ and $x\in\rn$,
\begin{align*}
R_{-\Delta}(f)(x)
:=\sup_{t\in(0,\,\fz)}\lf|e^{-t\Delta}(f)(x)\r|
:=\sup_{t\in(0,\,\fz)}\lf|\frac{1}{(4\pi t)^{n/2}}\int_\rn e^{-\frac{|y-z|^2}{4t}}f(z)\,dz\r|
\end{align*}
and
\begin{align*}
M^\ast_{-\Delta,\,\az}(f)(x)
:=\sup_{(y,\,t)\in\Gamma_\az(x)}\lf|e^{-t^2\Delta}(f)(y)\r|
:=\sup_{(y,\,t)\in\Gamma_\az(x)}\lf|\frac{1}{(4\pi t^2)^{n/2}}\int_\rn e^{-\frac{|y-z|^2}{4t^2}}f(z)\,dz\r|,
\end{align*}
where $\az\in(0,\,\fz)$ and $\bgz_\az(x):=\{(y,t)\in\rr^{n+1}_+:\ |x-y|<\az t\}$
denotes the \emph{cone of aperture $\az$ with  vertex $x$}.

It is easy to see that there exists a positive constant $C$ such that,
for any $\az\in[1,\,\fz)$, $f\in C_c^\fz(\rn)$ and $x\in\rn$,
\begin{align}\label{eq-0.2}
R_{-\Delta}(f)(x)
&\le M^\ast_{-\Delta,\,\az}(f)(x)\\
&=\sup_{(y,\,t)\in\Gamma_\az(x)}\lf|e^{-t^2\Delta}(f)(y)\r|
\ls\sup_{(y,\,t)\in\Gamma_\az(x)}\int_\rn\frac{1}{t^n}e^{-\frac{|y-z|^2}{4t^2}}|f(z)|\,dz\noz\\
&\sim\sup_{(y,\,t)\in\Gamma_\az(x)}\lf[\frac{1}{t^n}\int_{B(y,\,\az t)}|f(z)|\,dz+
\sum_{j=0}^\fz e^{-\frac{(2^j\az)^2}{4}}\frac{1}{t^n}\int_{B(y,\,2^{j+1}\az t)\setminus B(y,\,2^{j}\az t)}|f(z)|\,dz\r]\noz\\
&\ls\sup_{t\in(0,\,\fz)}
\lf[\az^n \frac{1}{(2\az t)^n}\int_{B(x,\,2\az t)}|f(z)|\,dz\r.\noz\\
&\hs\lf.+\sum_{j=1}^\fz e^{-\frac{(2^j\az)^2}{4}}(2^j\az)^n\frac{1}{(2^{j+2}\az t)^n}\int_{B(x,\,2^{j+2}\az t)}|f(z)|\,dz\r]\noz\\
&\le C\az^n M(f)(x),\noz
\end{align}
where $M$ is the Hardy--Littlewood maximal function as in \eqref{eq-0.4}.

In particular, if $L:=-\Delta$ in \eqref{eq-0.8} and\eqref{eq-0.5},
then $g_{-\Delta}$ and $S_{-\Delta,\,\az}$ are just the classical
Littlewood--Paley functions. Indeed, we have, for any $f\in C_c^\fz(\rn)$ and $x\in\rn$,
\begin{align}\label{eq-g}
g_{-\Delta}(f)(x)
&=\lf[\int_0^\fz\lf|t\Delta e^{-t\Delta}(f)(x)\r|^2\,\frac{dt}{t}\r]^{1/2}
=\lf[\int_0^\fz\lf|\psi_{\sqrt{t}}\ast f(x)\r|^2\,\frac{dt}{t}\r]^{1/2}\\
&=\sqrt{2}\lf[\int_0^\fz\lf|\psi_{t}\ast f(x)\r|^2\,\frac{dt}{t}\r]^{1/2}\noz
\end{align}
and
\begin{align}\label{eq-square}
S_{-\Delta,\,\az}(f)(x)
&=\lf[\int_0^\fz\int_{|y-x|<\az t}\lf|t^2\Delta e^{-t^2\Delta}(f)(y)\r|^2\dydt\r]^{1/2}\\
&=\lf[\int_0^\fz\int_{|y-x|<\az t}\lf|\psi_t\ast f(y)\r|^2\dydt\r]^{1/2},\noz
\end{align}
where $\psi(\cdot):=\frac12 |\cdot|^2e^{-|\cdot|^2/4}-ne^{-|\cdot|^2/4}$ and $\psi_t(\cdot):=t^{-n}\psi(\cdot/t)$.

For $g_{-\Delta}$ and $S_{-\Delta,\,\az}$, we have the following estimates.
\begin{lemma}\label{lem-9}
Let $\az\in(0,\,\fz)$.
Then there exists a positive constant $C:=C_{(n)}$ such that,
for any $f\in C_c^\fz(\rn)$ and $x\in\rn$,
\begin{align}\label{eq-1.14}
S_{-\Delta,\,\az}(f)(x)\le C \az^{\frac{3n}{2}+1}G_{1}(f)(x)
\end{align}
and
\begin{align}\label{eq-1.16}
g_{-\Delta}(f)(x)\le CG_{1}(f)(x),
\end{align}
where $G_1$ is as in \eqref{eq-0.7} with $\az=\beta=1$ therein.
\end{lemma}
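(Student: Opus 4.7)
The strategy is to decompose the heat-kernel profile $\psi$ appearing in \eqref{eq-g} and \eqref{eq-square} into dyadic localized pieces, each of which is a controlled constant times a dilation of a function in $\mathcal{C}_1(\rn)$, and then to insert this decomposition into the definitions of $g_{-\Delta}$ and $S_{-\Delta,\,\az}$.

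Concretely, I would fix a smooth radial partition of unity $\{\chi_k\}_{k\ge 0}$ on $\rn$ with $\supp\chi_0\st B(0,\,2)$ and $\supp\chi_k\st\{2^{k-1}\le|x|\le 2^{k+1}\}$ for $k\ge 1$, and set $\psi^{(k)}:=\psi\chi_k$. Since $\psi(x)=(\frac{1}{2}|x|^2-n)e^{-|x|^2/4}$ and all its derivatives enjoy Gaussian decay, for every $N\in(0,\,\fz)$ one has
$$\|\psi^{(k)}\|_{L^\fz(\rn)}+2^k\|\nabla\psi^{(k)}\|_{L^\fz(\rn)}\le C_{(N)}2^{-kN}.$$
The $\psi^{(k)}$ need not have mean zero, so I would correct them by a telescoping modification built from a fixed mean-one bump supported in $B(0,\,1)$: because a direct computation gives $\int_{\rn}\psi(x)\,dx=0$, the correction terms sum to zero and the resulting pieces $\widetilde{\psi}^{(k)}$ are still supported in $B(0,\,2^{k+1})$, have integral zero, and satisfy the same size and Lipschitz bounds (up to a dimensional constant).

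A rescaling calculation then shows that, for $\lambda_k:=C_{(N)}2^{k(n+1-N)}$, the function $\phi^{(k)}(y):=\lambda_k^{-1}2^{kn}\widetilde{\psi}^{(k)}(2^k y)$ belongs to $\mathcal{C}_1(\rn)$, and the dilation identity $\widetilde{\psi}^{(k)}_t=\lambda_k(\phi^{(k)})_{2^k t}$ holds. Summing in $k$ and invoking the definition \eqref{eq-4.0} of $A_1$, this yields the pointwise master inequality
$$|\psi_t\ast f(y)|\le\sum_{k\ge 0}\lambda_k A_1(f)(y,\,2^k t),\qquad\forall\,(y,\,t)\in\rr^{n+1}_+.$$
Taking $N$ so large that $\sum_k\lambda_k 2^{kn}<\fz$, squaring, applying the Cauchy--Schwarz inequality in $k$, and performing the substitution $s=2^k t$ in \eqref{eq-g} gives
$$[g_{-\Delta}(f)(x)]^2\ls\sum_k\lambda_k\int_0^\fz[A_1(f)(x,\,s)]^2\,\frac{ds}{s}\sim [G_1(f)(x)]^2,$$
where the final equivalence is the pointwise comparability of the intrinsic $g$- and area-type functions mentioned after \eqref{eq-0.7}. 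For \eqref{eq-1.14}, the same pointwise estimate and Cauchy--Schwarz, together with the change of variables $s=2^k t$ (which rescales the cone $|y-x|<\az t$ into $|y-x|<\az s/2^k$ and contributes a Jacobian $2^{kn}$), lead to $S_{-\Delta,\,\az}(f)(x)\ls G_{1,\,\az}(f)(x)$, after which the change-of-aperture inequality $G_{1,\,\az}(f)\le C_{(n)}\az^{3n/2+1}G_1(f)$ recalled in Remark \ref{rem-4}(ii) completes the proof.

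The principal obstacle is engineering the mean-zero correction in the dyadic splitting of $\psi$: without it the pieces cannot be matched with elements of $\mathcal{C}_1(\rn)$, but the identity $\int_{\rn}\psi(x)\,dx=0$ allows a telescoping fix that preserves both the localization and the rapid decay of the coefficients $\lambda_k$.
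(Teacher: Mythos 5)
Your proof is correct and is a genuinely different route from the paper's. The paper passes through Wilson's auxiliary intrinsic square functions $\wz{g}_{(\beta,\epsilon)}$ and $\wz{G}_{(\beta,\epsilon),\az}$ built from the polynomially-decaying class $\mathcal{C}_{(\beta,\epsilon)}(\rn)$: it verifies directly (via the mean-value theorem) that $\psi$ lies in $\mathcal{C}_{(1,\epsilon)}(\rn)$ up to a harmless constant, then invokes Wilson's Exercise 6.4 ($\wz{g}_{(1,\epsilon)}\sim\wz{G}_{(1,\epsilon)}$), Theorem 6.3 ($\wz{G}_{(1,\epsilon')}\ls G_1$), and Lemma 6.2/Exercise 6.1 for the aperture rescaling. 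You instead decompose the profile $\psi$ itself into dyadic, compactly supported, mean-zero, Lipschitz pieces, rescale each into $\mathcal{C}_1(\rn)$, and derive the pointwise master inequality $|\psi_t\ast f(y)|\le\sum_k\lambda_kA_1(f)(y,\,2^kt)$, after which Cauchy--Schwarz and the substitution $s=2^kt$ finish the job, using only the comparability $g_1\sim G_1$ already recorded after \eqref{eq-0.7} and the aperture estimate from Remark \ref{rem-4}(ii). Your approach is more self-contained and avoids the intermediate $\wz{g}$, $\wz{G}$ machinery, at the cost of having to engineer the telescoping mean-zero correction; the paper's approach is shorter in prose because it outsources more technical work to Wilson's book. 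Two cosmetic points to tighten when writing this up: the support of $\wz\psi^{(k)}$ is $B(0,\,2^{k+1})$, so the rescaling defining $\phi^{(k)}$ should use the factor $2^{k+1}$ (not $2^k$) to land $\supp\phi^{(k)}$ inside the unit ball as required by $\mathcal{C}_1(\rn)$, and the resulting normalizer is $\lambda_k\sim2^{k(n-N)}$ rather than $2^{k(n+1-N)}$ once you account for the extra factor of $2^{k+1}$ in $\nabla\phi^{(k)}$; neither change affects the geometric summability you need.
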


\begin{proof}
For any $\beta\in(0,\,1]$ and $\epsilon\in(0,\,\fz)$,
let $\mathcal{C}_{(\beta,\,\epsilon)}(\rn)$ be the set of all functions $\phi$, defined on $\rn$,
such that, for any $x,\,\tilde{x}\in\rn$, $|\phi(x)|\le(1+|x|)^{-n-\epsilon}$,
$$|\phi(x)-\phi(\tilde{x})|\le|x-\tilde{x}|^\beta\lf[(1+|x|)^{-n-\epsilon}+(1+|\tilde{x}|)^{-n-\epsilon}\r]$$
and $\int_\rn\phi(x)\,dx=0$.
For any $\beta\in(0,\,1]$, $\epsilon\in(0,\,\fz)$, $\az\in(0,\,\fz)$ and $f\in C_c^\fz(\rn)$,
define
\begin{align*}
\wz{g}_{(\beta,\,\epsilon)}(f)(x)
:=\lf\{\int_0^\fz\lf[\wz{A}_{(\beta,\,\epsilon)}(f)(x,\,t)\r]^2\,\frac{dt}{t}\r\}^{1/2}
\end{align*}
and
\begin{align*}
\wz{G}_{(\beta,\,\epsilon),\,\az}(f)(x)
:=\lf\{\int_0^\fz\int_{|x-y|<\az t}\lf[\wz{A}_{(\beta,\,\epsilon)}(f)(y,\,t)\r]^2\,\dydt\r\}^{1/2},
\end{align*}
where
\begin{align*}
\wz{A}_{(\beta,\,\epsilon)}(f)(y,\,t):=\sup_{\phi\in\mathcal{C}_{(\beta,\,\epsilon)}(\rn)}
\lf|\phi_t\ast f(y)\r|.
\end{align*}
In particular, if $\az=1$, we simply write $\wz{G}_{(\beta,\,\epsilon),\,\az}$ as $\wz{G}_{(\beta,\,\epsilon)}$.
These square functions are introduced by Wilson \cite[p.\,775]{w07} (see also \cite[p.\,117]{w08}).

Let $\psi$ be as in \eqref{eq-square}.
It is easy to see that $\int_\rn\psi(x)\,dx=0$.
Moreover, we know that, for any $\epsilon\in(0,\,\fz)$ and $x\in\rn$,
$|\psi(x)|\ls(1+|x|)^{-n-\epsilon}$.
For any $x,\,\tilde{x}\in\rn$, if $|x-\tilde{x}|>1$, then we have
$$|\psi(x)-\psi(\tilde{x})|\ls (1+|x|)^{-n-\epsilon}+(1+|\tilde{x}|)^{-n-\epsilon}
\ls|x-\tilde{x}|[(1+|x|)^{-n-\epsilon}+(1+|\tilde{x}|)^{-n-\epsilon}].$$
If $|x-\tilde{x}|\le 1$, we consider two cases. When $|x|\le2$, we have $|\tilde{x}|\le|x|+|x-\tilde{x}|<3$
and hence $$|\psi(x)-\psi(\tilde{x})|\ls|x-\tilde{x}|\ls|x-\tilde{x}|[(1+|x|)^{-n-\epsilon}+(1+|\tilde{x}|)^{-n-\epsilon}].$$
When $|x|>2$, we find that, for any $\tz\in[0,\,1]$,
$$\frac{|x|}{2}<|x|-|x-\tilde{x}|\le|x+\tz(\tilde{x}-x)|\le |x|+|x-\tilde{x}|<\frac{3}{2}|x|.$$
From this and the mean value theorem, we deduce that
there exists some $\tz\in[0,\,1]$ such that
\begin{align*}
|\psi(x)-\psi(\tilde{x})|
&\le|x-\tilde{x}|\lf|\nabla\psi(x+\tz(\tilde{x}-x))\r|
\ls|x-\tilde{x}|(1+|x+\tz(\tilde{x}-x)|)^{-n-\epsilon}\\
&\ls|x-\tilde{x}|[(1+|x|)^{-n-\epsilon}+(1+|\tilde{x}|)^{-n-\epsilon}].
\end{align*}
Thus, up to a positive harmless constant multiple, $\psi\in \mathcal{C}_{(1,\,\epsilon)}(\rn)$
for any $\epsilon\in(0,\,\fz)$.
Thus, for any $f\in C_c^\fz(\rn)$ and $x\in\rn$,
$g_{-\Delta}(f)(x)\ls\wz{g}_{(1,\,\epsilon)}(f)(x)$
and
\begin{align}\label{eq-1.15}
S_{-\Delta,\,\az}(f)(x)\ls \wz{G}_{(1,\,\epsilon),\,\az}(f)(x).
\end{align}

To estimate $g_{-\Delta}$, by \cite[Exercise 6.4]{w08}, we know that,
for any $f\in C_c^\fz(\rn)$ and $x\in\rn$,
$\wz{g}_{(1,\,\epsilon)}(f)(x)\sim \wz{G}_{(1,\,\epsilon)}(f)(x)$,
where the implicit positive equivalence constants depend only on $n$ and $\epsilon$.
Moreover, from \cite[Theorem 6.3]{w08}, it follows that, for any $\epsilon'\in(1,\,\fz)$,
$f\in C_c^\fz(\rn)$ and $x\in\rn$,
$\wz{G}_{(1,\,\epsilon')}(f)(x)\ls G_1(f)(x)$. Thus, \eqref{eq-1.16} holds true.

To estimate $S_{\Delta,\,\az}$, by \cite[Lemma 6.2]{w08} and an argument similar to
that used in the proof of \cite[Exercise 6.1]{w08}, we know that,
for any $s\in(0,\,\fz)$, $f\in C_c^\fz(\rn)$, $x\in\rn$ and $y\in B(x,\,s)$,
\begin{align*}
\wz{A}_{(1,\,\epsilon)}(f)\lf(y,\,\frac{s}{\az}\r)
\le \az^{n+1}\wz{A}_{(1,\,\epsilon')}(f)(y,\,s),
\end{align*}
where $\epsilon':=\epsilon-1$.
By this and \cite[Theorem 6.3]{w08}, we find that, for any $f\in C_c^\fz(\rn)$ and $x\in\rn$,
\begin{align*}
\wz{G}_{(1,\,\epsilon),\,\az}(f)(x)
&=\lf\{\int_0^\fz\int_{|x-y|<\az t}\lf[\wz{A}_{(1,\,\epsilon)}(f)(y,\,t)\r]^2\,\dydt\r\}^{1/2}\\
&\sim\az^{n/2}\lf\{\int_0^\fz\int_{|x-y|<s}
\lf[\wz{A}_{(1,\,\epsilon)}(f)\lf(y,\,\frac{s}{\az}\r)\r]^2\,\frac{dy\,ds}{s^{n+1}}\r\}^{1/2}\\
&\ls\az^{\frac{3n}{2}+1}\lf\{\int_0^\fz\int_{|x-y|<s}
\lf[\wz{A}_{(1,\,\epsilon')}(f)(y,\,s)\r]^2\,\frac{dy\,ds}{s^{n+1}}\r\}^{1/2}\\
&\ls\az^{\frac{3n}{2}+1}\wz{G}_{(1,\,\epsilon')}(f)(x)
\ls\az^{\frac{3n}{2}+1}G_1(f)(x),
\end{align*}
where $\epsilon'\in(1,\,\fz)$.
This, combined with \eqref{eq-1.15}, implies that \eqref{eq-1.14} holds true,
which completes the proof of Lemma \ref{lem-9}.
\end{proof}

Let $\rho$ be as in \eqref{eq aux} with $V\in RH_q(\rn)$ and $q\in(n/2,\,\fz)$, and
\begin{align}\label{eq-ball}
\mathcal{B}_{\rho}:=\{B(x,\,r):\ x\in\rn,\ r\le\rho(x)\}.
\end{align}
The following $\rho$-localized weights were introduced in \cite{bhs11}.
\begin{definition}[\cite{bhs11}]
Let $n\geq 3$, $\rho$ be as in \eqref{eq aux} with $V\in RH_q(\rn)$ and $q\in(n/2,\,\fz)$,
and $p\in(1,\,\fz)$.
The \emph{local weight class $A_p^{\rho,\,\loc}(\rn)$} is defined to be the set
of all non-negative locally integrable functions $w$ on $\rn$ such that
\begin{align}\label{eq-loc1}
[w]_{A_p^{\rho,\,\loc}(\rn)}:=\sup_{B\in{\cb_\rho}}\frac{1}{|B|}\int_B w(x)\,dx
\lf\{\frac{1}{|B|}\int_B [w(x)]^{-\frac{1}{p-1}}\,dx\r\}^{p-1}<\fz.
\end{align}
\end{definition}

\begin{remark}\label{rem-2}
\begin{enumerate}
\item[(i)] Let $n\geq 3$, $\rho$ be as in \eqref{eq aux} with $V\in RH_q(\rn)$ and $q\in(n/2,\,\fz)$,
$p\in(1,\,\fz)$, $\tz\in[0,\,\fz)$ and $w\in A_p^{\rho,\,\tz}(\rn)$.
Then it is easy to see that $A_p^{\rho,\,\tz}(\rn)\st A_p^{\rho,\,{\loc}}(\rn)$
and there exists a positive constant $C$ such that, for any $w\in A_p^{\rho,\,\tz}(\rn)$,
$$[w]_{A_p^{\rho,\,{\loc}}(\rn)}\le C[w]_{A_p^{\rho,\,\tz}(\rn)}.$$

\item[(ii)]
Let $n\geq 3$, $\rho$ be as in \eqref{eq aux} with $V\in RH_q(\rn)$ and $q\in(n/2,\,\fz)$,
$p\in(1,\,\fz)$ and $\beta\in(1,\,\fz)$.
Define $A_p^{\beta\rho,\,\loc}(\rn)$
to be the set of all non-negative locally integrable functions $w$ on $\rn$
satisfying \eqref{eq-loc1} with $B\in\cb_\rho$ therein replaced by
$B\in\{B(x,\,r):\ x\in\rn,\,r\le\beta\rho(x)\}$.
Then, from \cite[Corllary 1]{bhs11} and its proof, we deduce that, for any $\beta\in(1,\,\fz)$,
$A_p^{\rho,\,\loc}(\rn)=A_p^{\beta\rho,\,\loc}(\rn)$.
Moreover, for any $w\in A_p^{\rho,\,\tz}(\rn)$,
$$[w]_{A_p^{\rho,\,\loc}(\rn)}\sim [w]_{A_p^{\beta\rho,\,\loc}(\rn)},$$
where the implicit positive equivalence constants depend only on $\beta,\,n$ and $\rho$.

\item[(iii)]
Let $B_0$ be a ball in $\rn$.
A weight $w$, defined on $B_0$, is said to belong to $A_p(B_0)$ if
the inequality \eqref{eq-loc1} holds true for every ball $B\st B_0$.
\end{enumerate}
\end{remark}

The following lemma concerns the extension of weights, which is just \cite[Lemma 1]{bhs11}.
\begin{lemma}[\cite{bhs11}]\label{lem-7}
Let $B_0$ be a ball in $\rn$.
Assume that $p\in(1,\,\fz)$ and $w_0\in A_p(B_0)$. Then $w_0$ has an extension $w\in A_p(\rn)$ on $\rn$
such that, for any $x\in B_0$, $w_0(x)=w(x)$ and
$[w_0]_{A_{p}(B_0)}\sim[w]_{A_p(\rn)}$, where the implicit positive equivalence
constants are independent of $w_0$ and $p$.
\end{lemma}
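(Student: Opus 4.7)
The reverse inequality $[w_0]_{A_p(B_0)}\le[w]_{A_p(\rn)}$ is automatic for any extension $w$ of $w_0$, because the supremum defining $[w_0]_{A_p(B_0)}$ is taken over a subfamily of the balls entering the supremum for $[w]_{A_p(\rn)}$. The content of the lemma is therefore to produce an explicit extension and to prove the nontrivial bound $[w]_{A_p(\rn)}\ls [w_0]_{A_p(B_0)}$, with implicit constant independent of $w_0$ and of $p$.

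My plan is to use the spherical-reflection extension. After translating and rescaling we may assume $B_0=B(0,1)$. Let $\iota(x):=x/|x|^2$ be the conformal inversion, an involution of $\rn\setminus\{0\}$ that interchanges $B_0\setminus\{0\}$ with $\rn\setminus\overline{B_0}$. Define
\begin{align*}
w(x):=\begin{cases} w_0(x), & x\in B_0,\\ w_0(\iota(x)), & x\in\rn\setminus\overline{B_0}; \end{cases}
\end{align*}
a standard truncation-and-limit argument (replacing $w_0$ by $\min\{\max\{w_0,1/N\},N\}$ and letting $N\to\fz$) handles the potential singularities at $0$ and at infinity. To verify $w\in A_p(\rn)$, I would argue by cases on an arbitrary ball $B=B(y,r)\st\rn$. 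If $B\st B_0$, the $A_p$ ratio at $B$ is bounded by $[w_0]_{A_p(B_0)}$ by definition. If $B\st\rn\setminus\overline{B_0}$, a change of variables through $\iota$ converts the $A_p$ ratio at $B$ into that of $w_0$ at the image ball $\iota(B)\st B_0$, with the Jacobian factor of $\iota$ under control provided $B$ avoids a neighborhood of $0$ at scale $r$. The essential case is when $B$ crosses $\partial B_0$: here one checks that $|B\cap B_0|$, $|\iota(B\setminus\overline{B_0})|$ and $|B|$ are all comparable, and then that both $\frac{1}{|B|}\int_B w$ and $\frac{1}{|B|}\int_B w^{1-p'}$ are comparable to the analogous averages of $w_0$ over a fixed dilate of $B\cap B_0$ contained in $B_0$, to which the condition $[w_0]_{A_p(B_0)}$ applies.

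The main obstacle is the straddling/large-ball case, since the Jacobian of $\iota$ degenerates near $0$ and at infinity. To handle this I would further separate into balls of radius $r\le 10$ (which, if they cross $\partial B_0$, necessarily lie in a compact annulus where $\iota$ is bi-Lipschitz with constants depending only on $n$, so the needed change-of-variable comparison is direct) and balls of radius $r>10$ (which essentially contain $B_0$; here one uses the $A_\fz(B_0)$ doubling property of $w_0$, a standard consequence of $w_0\in A_p(B_0)$, combined with a dyadic decomposition of $\rn\setminus B_0$ and the change of variables $x\mapsto\iota(x)$ on each dyadic shell, to show that the averages of $w$ over $B$ are comparable to the averages of $w_0$ over $B_0$ itself). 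Combining these pieces yields $[w]_{A_p(\rn)}\ls[w_0]_{A_p(B_0)}$ with implicit constant depending only on $n$, completing the proof.
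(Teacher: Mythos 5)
The paper does not reprove this lemma: it is quoted verbatim from \cite[Lemma 1]{bhs11}, so there is no in-paper proof to compare against; I therefore assess your argument on its own terms. Your inversion construction contains a genuine gap that cannot be repaired by refining the case analysis. The inversion $\iota(x)=x/|x|^2$ sends the \emph{center} of $B_0$ to infinity, so a singularity of $w_0$ at the center of $B_0$ (which $A_p(B_0)$ tolerates up to essentially any order below $|y|^{-n}$) is converted into growth of $w$ at infinity, and such growth is in general incompatible with $A_p(\rn)$. Concretely, take $B_0=B(0,1)$ and $w_0(y):=|y|^{-n+\epsilon}$ with a fixed $\epsilon\in(0,1)$. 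Then $w_0\in A_1(B_0)\st A_p(B_0)$ for every $p\in(1,\fz)$, with $[w_0]_{A_p(B_0)}$ controlled by $n$ and $\epsilon$ alone. Under your extension, $w(x)=|x|^{n-\epsilon}$ for $|x|>1$, and a direct computation on $B:=B(0,R)$ with $R$ large gives $\frac{1}{|B|}\int_B w\sim R^{n-\epsilon}$, while $\frac{1}{|B|}\int_B w^{1-p'}\sim R^{-n}$ whenever $p<2-\epsilon/n$; hence the $A_p$ ratio over $B(0,R)$ is $\sim R^{n(2-p)-\epsilon}\to\fz$. Thus $w\notin A_p(\rn)$ for $p$ close to $1$, even though $w_0\in A_p(B_0)$ with a uniformly bounded characteristic.

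The failure sits exactly in the step you sketch for balls of radius $r>10$: for $B$ containing $B_0$, the averages of $w$ over $B$ are \emph{not} comparable to those of $w_0$ over $B_0$ --- they are dominated by the exterior part of $B$ and are unbounded. The truncation-and-limit device does not repair this, because the $A_p(\rn)$ characteristic of the extension of the truncated weight still grows without bound as the truncation level tends to infinity, so no estimate in terms of $[w_0]_{A_p(B_0)}$ alone results. (A secondary issue: a ball of radius $>10$ lying entirely outside $B_0$ need not contain, or even come near, $B_0$, so that case is not actually covered by your dichotomy.) Any correct extension must avoid sending the center of $B_0$ to infinity; for instance, one may transport the problem to a cube via a bi-Lipschitz change of variables and use an even periodic extension, or reflect only an annular collar of $B_0^{\com}$ radially into a collar of $B_0$ and continue by a suitable constant outside a fixed dilate of $B_0$, rather than inverting through the center.
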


The following lemma is just \cite[Proposition 2]{bhs11} (see also \cite[Lemma 2.3]{dz99}).
\begin{lemma}[\cite{bhs11}]\label{lem-1}
There exists a sequence $\{x_j\}_{j\in\nn}$ of points in $\rn$ such that
the family $\{B_j:=B(x_j,\,\rho(x_j))\}_{j\in\nn}$
of balls satisfies that
\begin{enumerate}
\item[(i)] $\cup_{j\in\nn}B_j=\rn$;

\item[(ii)] For any $\sigma\in[1,\,\fz)$, there exist positive
constants $C$ and $N$ such that, for any $x\in\rn$,
$\sum_{j\in\nn}{\mathbf 1}_{\sigma B_j}(x)\le C\sigma^{N}$.
\end{enumerate}
\end{lemma}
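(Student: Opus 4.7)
The plan is to carry out a Vitali-type covering adapted to the critical radius $\rho$, with Lemma \ref{lem aux} supplying both the local comparability of $\rho$ needed for (i) and the polynomial-in-$\sigma$ control needed for (ii).

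First, I would consider the family $\{B(x,\,\rho(x)/c_0):\ x\in\rn\}$ for a small fixed fraction $1/c_0$, chosen large enough that the multiplicative constants from Lemma \ref{lem aux} get absorbed, and extract from it a countable maximal pairwise disjoint subfamily $\{B(x_j,\,\rho(x_j)/c_0)\}_{j\in\nn}$; countability follows from the separability of $\rn$ together with the positivity of the Lebesgue measure of each selected ball (one may also work in successive annuli $\{2^{k-1}\le 1+|x|<2^k\}$ to avoid any Zorn-type appeal). By maximality, for every $x\in\rn$ there exists $j$ with $B(x,\rho(x)/c_0)\cap B(x_j,\rho(x_j)/c_0)\ne\emptyset$, so that $|x-x_j|\le\rho(x)/c_0+\rho(x_j)/c_0$. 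Plugging this inequality into Lemma \ref{lem aux} forces $\rho(x)\sim\rho(x_j)$ with constants depending only on $n$ and $N_0$, and then $|x-x_j|<\rho(x_j)$ provided $c_0$ was chosen large enough. This proves (i).

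For (ii), fix $\sigma\in[1,\,\fz)$ and $x\in\rn$, and set $\mathcal{J}_x:=\{j\in\nn:\ x\in\sigma B_j\}$. For any $j,k\in\mathcal{J}_x$ the triangle inequality gives $|x_j-x_k|\le\sigma(\rho(x_j)+\rho(x_k))$; feeding this bound into the upper half of Lemma \ref{lem aux} and bootstrapping with the lower half produces constants $N_1,N_2$ depending only on $n$ and $N_0$ such that, uniformly in $j,k\in\mathcal{J}_x$,
\begin{align*}
\sigma^{-N_1}\rho(x_j)\ls\rho(x_k)\ls\sigma^{N_1}\rho(x_j)\quad\text{and}\quad
|x_j-x_k|\ls\sigma^{N_2}\rho(x_j).
\end{align*}
Fixing any $j_0\in\mathcal{J}_x$, it follows that each of the pairwise disjoint balls $\{B(x_k,\rho(x_k)/c_0)\}_{k\in\mathcal{J}_x}$ is contained in the single ball $B(x_{j_0},C\sigma^{N_2}\rho(x_{j_0}))$ and has volume at least $c\sigma^{-nN_1}\rho(x_{j_0})^n$. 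Comparing total volumes then yields $\#\mathcal{J}_x\le C\sigma^{N}$ with $N:=n(N_1+N_2)$, which is (ii).

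The main obstacle is bookkeeping the constants. Lemma \ref{lem aux} comes with multiplicative factors depending on $n$ and $N_0$, and one must simultaneously choose the selection radius $\rho(x)/c_0$ small enough that the ``meets'' condition in the Vitali step really does place $x$ inside $B_j$, and track how these factors propagate through the bootstrap that converts $|x_j-x_k|\le\sigma(\rho(x_j)+\rho(x_k))$ into clean polynomial bounds on $\rho(x_k)/\rho(x_j)$ and $|x_j-x_k|/\rho(x_j)$. Once the exponents are recorded, both conclusions reduce to an elementary volume count.
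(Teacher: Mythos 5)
Your Vitali-type covering argument is correct, and it is essentially the standard proof of this lemma found in the cited references; the paper itself simply cites \cite{bhs11} (Proposition 2) and \cite{dz99} (Lemma 2.3) without reproducing the argument. The constant-tracking you sketch does close: choosing $c_0$ large compared to the constants $C,N_0$ of Lemma \ref{lem aux} makes the maximality step land $x$ inside $B_j$, and for (ii) one obtains $\rho(x_k)/\rho(x_j)\lesssim\sigma^{N_0}$, $|x_j-x_k|\lesssim\sigma^{N_0+1}\rho(x_j)$, and hence $\#\mathcal J_x\lesssim\sigma^{n(2N_0+1)}$, exactly as you indicate.
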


In what follows, for any $x\in\rn$, let
\begin{align}\label{eq-rb}
B_x:=B(x,\,\rho(x)).
\end{align}
For any $\az\in(0,\,\fz)$, we define the following $\rho$-localized
$R_{-\Delta}$, $M^\ast_{-\Delta,\,\az}$, $g_{-\Delta}$ and $S_{-\Delta,\,\az}$, respectively, by
setting, for any $f\in L^1_{\loc}(\rn)$ and $x\in\rn$,
\begin{align}\label{eq-loc2}
R^{\loc}_{-\Delta}(f)(x):=R_{-\Delta}\lf(f{\mathbf 1}_{B_x}\r)(x),
\end{align}
\begin{align}\label{eq-loc4}
M^{\ast,\,\loc}_{-\Delta,\,\az}(f)(x):=M^\ast_{-\Delta,\,\az}\lf(f{\mathbf 1}_{2B_x}\r)(x),
\end{align}
\begin{align}\label{eq-loc3}
g_{-\Delta}^{\loc}(f)(x):=g_{-\Delta}\lf(f{\mathbf 1}_{B_x}\r)(x)
\end{align}
and
\begin{align}\label{eq-sl}
S_{-\Delta,\,\az}^{\loc}(f)(x):=S_{-\Delta,\,\az}\lf(f{\mathbf 1}_{2B_x}\r)(x).
\end{align}

Applying the method used to prove the boundedness of $\rho$-localized operators
(see \cite[Proposition 4]{bhs11}),
we obtain the following quantitative weighted estimates of $R_{-\Delta}^{\loc}$,
$M^{\ast,\,\loc}_{-\Delta,\,\az}$, $g_{-\Delta}^{\loc}$ and $S_{-\Delta,\,\az}^{\loc}$,
respectively.
\begin{lemma}\label{lem-2}
Let $n\geq 3$, $\rho$ be as in \eqref{eq aux} with $V\in RH_q(\rn)$ and $q\in(n/2,\,\fz)$,
$\az\in[1,\,\fz)$ and $p\in(1,\,\fz)$.
Then there exists a positive constant $C$ such that,
for any $w\in A_p^{\rho,\,{\loc}}(\rn)$ and $f\in L^p(w)$,
\begin{align}\label{eq-20}
\lf\|R^{\loc}_{-\Delta}(f)\r\|_{L^p(w)}
\le C[w]^{\frac{1}{p-1}}_{A_p^{\rho,\,{\loc}}(\rn)}\|f\|_{L^p(w)},
\end{align}
\begin{align}\label{eq-21}
\lf\|M^{\ast,\,\loc}_{-\Delta,\,\az}(f)\r\|_{L^p(w)}
\le C\az^n[w]^{\frac{1}{p-1}}_{A_p^{\rho,\,{\loc}}(\rn)}\|f\|_{L^p(w)},
\end{align}
\begin{align}\label{eq-23}
\lf\|g_{-\Delta}^{\loc}(f)\r\|_{L^p(w)}
\le C[w]^{\max\{\frac12,\,\frac{1}{p-1}\}}_{A_p^{\rho,\,{\loc}}(\rn)}
\|f\|_{L^p(w)}
\end{align}
and
\begin{align}\label{eq-22}
\lf\|S_{-\Delta,\,\az}^{\loc}(f)\r\|_{L^p(w)}
\le C\az^{\frac{3n}{2}+1}[w]^{\max\{\frac12,\,\frac{1}{p-1}\}}_{A_p^{\rho,\,{\loc}}(\rn)}
\|f\|_{L^p(w)}.
\end{align}
\end{lemma}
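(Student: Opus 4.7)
The plan is to reduce each of the four bounds to its classical global sharp-weighted analogue---Buckley's estimate \eqref{eq-0.3} for $M$ and Lerner's estimate \eqref{eq-0.0} for the intrinsic Lusin area function $G_1$---via a unified $\rho$-localization scheme along the lines of \cite[Proposition 4]{bhs11}. I start with the Whitney-type covering $\{B_j:=B(x_j,\rho(x_j))\}_{j\in\nn}$ from Lemma \ref{lem-1}. The slow variation of $\rho$ provided by Lemma \ref{lem aux} lets me fix a constant $\sigma_0\ge 2$, depending only on $n$, $q$ and $C_0$, such that $2B_x\st\sigma_0 B_j$ for every $x\in B_j$. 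By Remark \ref{rem-2}(ii) together with the very definition of $A_p^{\rho,\,\loc}(\rn)$, the restriction $w|_{\sigma_0B_j}$ lies in $A_p(\sigma_0B_j)$ with constant $\ls[w]_{A_p^{\rho,\,\loc}(\rn)}$, uniformly in $j$; Lemma \ref{lem-7} then produces a global extension $\wz w_j\in A_p(\rn)$ agreeing with $w$ on $\sigma_0B_j$ and satisfying $[\wz w_j]_{A_p(\rn)}\ls[w]_{A_p^{\rho,\,\loc}(\rn)}$.

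On each $B_j$ I will dominate the localized operator pointwise by a global operator acting on $f\mathbf{1}_{\sigma_0B_j}$. For the two maximal functions, \eqref{eq-0.2} combined with the monotonicity of $M$ in $|f|$ gives, for $x\in B_j$,
\[
R^{\loc}_{-\Delta}(f)(x)\le M^{\ast,\,\loc}_{-\Delta,\,\az}(f)(x)\ls\az^n M\lf(f\mathbf{1}_{\sigma_0B_j}\r)(x).
\]
For the Littlewood--Paley functions, Lemma \ref{lem-9} yields $g^{\loc}_{-\Delta}(f)(x)\ls G_1(f\mathbf{1}_{B_x})(x)$ and $S^{\loc}_{-\Delta,\,\az}(f)(x)\ls\az^{\frac{3n}{2}+1}G_1(f\mathbf{1}_{2B_x})(x)$, and an off-diagonal decomposition based on the splitting $f\mathbf{1}_{\sigma_0B_j}=f\mathbf{1}_{2B_x}+f\mathbf{1}_{\sigma_0B_j\setminus 2B_x}$, exploiting the compact support, mean zero and Lipschitz properties of admissible kernels $\phi\in\mathcal{C}_1(\rn)$, upgrades these to
\[
g^{\loc}_{-\Delta}(f)(x)+\az^{-\frac{3n}{2}-1}S^{\loc}_{-\Delta,\,\az}(f)(x)\ls G_1\lf(f\mathbf{1}_{\sigma_0B_j}\r)(x)+M\lf(f\mathbf{1}_{\sigma_0B_j}\r)(x),\quad x\in B_j.
\]
Raising each pointwise bound to the $p$-th power, integrating over $B_j$ against $w=\wz w_j$, invoking Buckley's \eqref{eq-0.3} and Lerner's \eqref{eq-0.0} on $L^p(\wz w_j)$, and then summing in $j$ via the bounded overlap of $\{\sigma_0B_j\}_j$ from Lemma \ref{lem-1}(ii) collapses $\sum_j\int_{\sigma_0B_j}|f|^pw\,dx$ into a multiple of $\|f\|_{L^p(w)}^p$, producing \eqref{eq-20}--\eqref{eq-22} with weight exponents $1/(p-1)$ in the maximal case and $\max\{1/2,\,1/(p-1)\}$ in the Lusin/$g$-function case. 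Here I use that $1/(p-1)\le\max\{1/2,\,1/(p-1)\}$ and $[w]_{A_p^{\rho,\,\loc}(\rn)}\ge 1$ to absorb the auxiliary $M$-term into the final $G_1$-bound.

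The chief technical point is the off-diagonal estimate hidden in the $G_1$ bound: since $G_1$ is not monotone in $|f|$, one cannot simply pass from $f\mathbf{1}_{2B_x}$ to $f\mathbf{1}_{\sigma_0B_j}$. I would handle this by observing that, for any $\phi\in\mathcal{C}_1(\rn)$, the support condition $\supp\phi\st B(0,1)$ makes $|(f\mathbf{1}_{\sigma_0B_j\setminus 2B_x})\ast\phi_t(y)|$ vanish whenever $|y-x|<t$ and $t\le c\rho(x)$ (since then $B(y,t)\st 2B_x$), while for larger $t$ the mean-zero and Lipschitz control on $\phi$ supply enough $t$-decay to dominate the resulting integral over the cone $\{(y,t):\ |y-x|<t\}$ by a constant multiple of $M(f\mathbf{1}_{\sigma_0B_j})(x)$. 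Once this decomposition is in place, the remainder of the argument is a routine tracking of constants.
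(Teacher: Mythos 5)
Your proposal is correct and takes essentially the same $\rho$-localization route as the paper: cover $\rn$ by critical balls $B_j$, dominate the localized operator on each $B_j$ by a global operator acting on $f\mathbf{1}_{\sigma_0 B_j}$, extend the weight from $\sigma_0 B_j$ to $\rn$ via Lemma \ref{lem-7}, apply \eqref{eq-0.3} and \eqref{eq-0.0}, and sum using the bounded overlap from Lemma \ref{lem-1}(ii). The one place you are more explicit than the paper---the off-diagonal decomposition compensating for the non-monotonicity of $G_1$ in $|f|$---is exactly what the paper hides behind ``we omit the details'' when reducing \eqref{eq-23} and \eqref{eq-22} to the maximal-function case, and your treatment of it is sound; note only that for the far piece the compact support and boundedness of $\phi\in\mathcal{C}_1(\rn)$ already give the required $(\rho(x)/t)^{n}$ decay, so the mean-zero and Lipschitz properties are not actually needed at that step.
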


\begin{proof}
We first prove \eqref{eq-21}.
By Lemma \ref{lem-1}, we know that there exists
a sequence $\{x_j\}_{j\in\nn}$ of points in $\rn$ such that
the family $\{B_j:= B(x_j,\,\rho(x_j))\}_{j\in\nn}$ of balls satisfies (i) and (ii) of Lemma \ref{lem-1}.
Let $\sigma:=1+C 2^{1+\frac{N_0}{N_0+1}}$ with positive constants $C$ and $N_0$ same as in Lemma \ref{lem aux}.
For any $j\in\nn$, let $\wz{B}_j:=\sigma B_j$.
By Lemma \ref{lem aux}, we know that,
for any $j\in\nn$ and $x\in B_j$, $2B_x=B(x,\,2\rho(x))\st\wz{B}_j$.
By this, it is easy to see that, for any $f\in L^p(w)$ and $x\in\rn$,
\begin{align}\label{eq-1.9}
M_{-\Delta,\,\az}^{\ast,\,\loc}(f)(x)\le \sum_{j=1}^\fz{\mathbf 1}_{B_j}(x)
M_{-\Delta,\,\az}^\ast(|f|{\mathbf 1}_{\wz{B}_j})(x)=:\wz{M}_\az^\ast(|f|)(x).
\end{align}

Let $w\in  A_p^{\rho,\,{\loc}}(\rn)$.
Then we know that, for any $j\in\nn$,
\begin{align}\label{eq-1.11}
w|_{\wz{B}_j}\in A_p(\wz{B}_j)\quad
\text{and}\quad [w|_{\wz{B}_j}]_{A_p(\wz{B}_j)}\ls[w]_{A_p^{\rho,\,{\loc}}(\rn)},
\end{align}
where the implicit positive constant is independent of $B_j$.
Indeed, for any ball $B:=B(x_B,\,r_B)\st \wz{B}_j$, if $r_B\le\sigma\rho(x_B)$, then, by the fact that $w\in  A_p^{\rho,\,{\loc}}(\rn)$ and Remark \ref{rem-2}(ii),
we find that $w\in A_p^{\sigma\rho,\,{\loc}}(\rn)$ with
$[w]_{A_p^{\rho,\,{\loc}}(\rn)}\sim[w]_{A_p^{\sigma\rho,\,{\loc}}(\rn)}$.
From this, we further deduce that
\begin{align}\label{eq-1.8}
\frac{1}{|B|}\int_B w(y)\,dy\lf\{\frac{1}{|B|}\int_B [w(y)]^{-\frac{1}{p-1}}\,dy\r\}^{p-1}\le [w]_{A_p^{\sigma\rho,\,{\loc}}(\rn)}\sim[w]_{A_p^{\rho,\,{\loc}}(\rn)}.
\end{align}
If $r_B> \sigma\rho(x_B)$,
then, we have $B(x_B,\,\sigma\rho(x_B))\st B(x_B,\,r_B)\st \wz{B}_j$.
By the fact that $|x_j-x_B|<\rho(x_j)$ and
Lemma \ref{lem aux}, we obtain
$\rho(x_B)\sim \rho(x_j)$. Thus, $|B(x_B,\,r_B)|\sim|\wz{B}_j|$.
From this, it follows that
\begin{align*}
\frac{1}{|B|}\int_B w(y)\,dy\lf\{\frac{1}{|B|}\int_B [w(y)]^{-\frac{1}{p-1}}\,dy\r\}^{p-1}
&\ls\frac{1}{|\wz{B}_j|}\int_{\wz{B}_j}w(y)\,dy\lf\{\frac{1}{|\wz{B}_j|}
\int_{\wz{B}_j}[w(y)]^{-\frac{1}{p-1}}\,dy\r\}^{p-1}\\
&\ls[w]_{A_p^{\sigma\rho,\,{\loc}}(\rn)}\sim[w]_{A_p^{\rho,\,{\loc}}(\rn)}.
\end{align*}
This, combined with \eqref{eq-1.8}, implies \eqref{eq-1.11}.
By Lemma \ref{lem-7}, we further know that, for any $j\in\nn$, $w|_{\wz{B}_j}$
admits an extension $w_j$ on $\rn$ such that $w_j\in A_p(\rn)$ and
$$[w_j]_{A_p(\rn)}\sim[w|_{\wz{B}_j}]_{A_p(\wz{B}_j)}\ls[w]_{A_p^{\rho,\,{\loc}}(\rn)},$$
where the implicit positive constants are independent of $j$.
From this, Lemma \ref{lem-1}(ii), \eqref{eq-0.2} and \eqref{eq-0.3}, it follows that,
for any $w\in A_p^{\rho,\,\loc}(\rn)$ and $f\in L^p(w)$,
\begin{align*}
\lf\|\wz{M}_\az^\ast(|f|)\r\|_{L^p(w)}^p
&=\int_\rn \sum_{j\in\nn}{\mathbf 1}_{B_j}(x)\lf|M_{-\Delta,\,\az}^\ast(|f|{\mathbf 1}_{\wz{B}_j})(x)\r|^pw(x)\,dx\\
&\ls\sum_{j\in\rn}\int_{B_j}
\lf|M_{-\Delta,\,\az}^\ast(|f|{\mathbf 1}_{\wz{B}_j})(x)\r|^pw(x)\,dx
\ls\sum_{j\in\nn}\int_{\rn}\lf|M_{-\Delta,\,\az}^\ast(|f|{\mathbf 1}_{\wz{B}_j})(x)\r|^p w_j(x)\,dx\noz\\
&\ls\az^{np}\sum_{j\in\nn}\int_{\rn}\lf|M(|f|{\mathbf 1}_{\wz{B}_j})(x)\r|^p w_j(x)\,dx
\ls\az^{np}\sum_{j\in\nn}[w_j]_{A_p(\rn)}^{\frac{p}{p-1}}\int_{\wz{B}_j}|f(x)|^pw_j(x)\,dx\noz\\
&\ls\az^{np}[w]_{A^{\rho,\,{\loc}}_p(\rn)}^{\frac{p}{p-1}}\int_\rn|f(x)|^p w(x)\,dx.\noz
\end{align*}
This, together with \eqref{eq-1.9}, implies that
\eqref{eq-21} holds true.
By an argument similar to that used in the proof of \eqref{eq-21},
we also know that \eqref{eq-20} holds true.

To prove \eqref{eq-23} and \eqref{eq-22},
we only need to repeat the proof of \eqref{eq-21} via replacing
\eqref{eq-0.2} and \eqref{eq-0.3} therein by Lemma \ref{lem-9} and \eqref{eq-0.0},
respectively, and we omit the details.
This finishes the proof of Lemma \ref{lem-2}.
\end{proof}

\section{Proofs of Theorems \ref{thm-1} and \ref{thm-2}}\label{s-2}

In this section, we show Theorems \ref{thm-1} and \ref{thm-2}.

\begin{proof}[Proof of Theorem \ref{thm-1}]
We show this theorem by three steps.

\emph{Step 1.} In this step, we show that, for any given $p\in(1,\,\fz)$ and $\tz\in[0,\,\fz)$,
and for any $w\in A_p^{\rho,\,\tz}(\rn)$ and $f\in C_c^\fz(\rn)$,
\begin{align}\label{eq-3.5}
\|g_L(f)\|_{L^p(w)}\ls [w]_{A_p^{\rho,\,\tz}(\rn)}^{\max\{\frac12,\,\frac{1}{p-1}\}}\|f\|_{L^p(w)}.
\end{align}
For any $f\in C_c^\fz(\rn)$ and $x\in\rn$, we write
\begin{align*}
g_L(f)(x):=g_L(f{\mathbf 1}_{B_x})(x)+g_L\lf(f{\mathbf 1}_{B_x^\com}\r)(x)=:g_{L}^{\loc}(f)(x)+g_{L}^{\rm glob}(f)(x),
\end{align*}
where $B_x$ is as in \eqref{eq-rb}.

We first estimate $g_{L}^{\rm glob}$. By Lemma \ref{lem-8}, we find that,
for any $f\in C_c^\fz(\rn)$ and $x\in\rn$,
\begin{align}\label{eq-3.3}
g_{L}^{\rm glob}(f)(x)
&=\lf[\int_0^\fz \lf|tLe^{-tL}\lf(f{\mathbf 1}_{B_x}^\com\r)(x)\r|^2\,\frac{dt}{t}\r]^{1/2}\\
&=\lf[\int_0^\fz \lf|\int_{|x-y|>\rho(x)}t\frac{\partial}{\partial t}K_t(x,\,y)f(y)\,dy\r|^2
\,\frac{dt}{t}\r]^{1/2}\noz\\
&\ls\lf[\int_0^\fz \lf\{\frac{1}{t^{n/2}}\lf[1+\frac{\sqrt{t}}{\rho(x)}\r]^{-N}\int_{|x-y|>\rho(x)}
e^{-c\frac{|x-y|^2}{t}}|f(y)|\,dy\r\}^2\,\frac{dt}{t}\r]^{1/2}\noz\\
&\ls\lf\{\int_0^\fz \frac{1}{t^n}\lf[1+\frac{\sqrt{t}}{\rho(x)}\r]^{-2N}\r.\noz\\
&\hs\times\lf.\lf[\sum_{j=0}^\fz e^{-c\frac{[2^j\rho(x)]^2}{t}}
\int_{B(x,\,2^{j+1}\rho(x))\setminus B(x,\,2^j\rho(x))}|f(y)|\,dy\r]^2\,\frac{dt}{t}\r\}^{1/2}\noz\\
&\ls\lf[\int_0^\fz\frac{1}{t^n}\lf\{1+\frac{\sqrt{t}}{\rho(x)}\r\}^{-2N}
\lf\{\sum_{j=0}^\fz \lf[\frac{\sqrt{t}}{2^j\rho(x)}\r]^{A}2^{j\gamma}[2^{j}\rho(x)]^n\r.\r.\noz\\
&\hs\lf.\lf.\times\lf[1+\frac{2^{j+1}\rho(x)}{\rho(x)}\r]^{-\gamma}\frac{1}{|B(x,\,2^{j+1}\rho(x))|}
\int_{B(x,\,2^{j+1}\rho(x))}|f(y)|\,dy\r\}^2\,\frac{dt}{t}\r]^{1/2}\noz\\
&\ls\lf\{\int_0^\fz\lf[\frac{\rho(x)}{\sqrt{t}}\r]^{2n}\lf[1+\frac{\sqrt{t}}{\rho(x)}\r]^{-2N}
\lf[\frac{\sqrt{t}}{\rho(x)}\r]^{2A}
\lf[\sum_{j=0}^\fz 2^{-j(M-n-\gamma)}M_\gamma(f)(x)\r]^2\,\frac{dt}{t}\r\}^{1/2}\noz\\
&\ls\lf\{\int_0^\fz\lf[1+\frac{\sqrt{t}}{\rho(x)}\r]^{-2N}
\lf[\frac{\sqrt{t}}{\rho(x)}\r]^{2(A-n)}\,\frac{dt}{t}\r\}^{1/2}M_\gamma(f)(x)\noz\\
&\ls\lf[\int_0^\fz\frac{s^{2(A-n)-1}}{(1+s)^{2N}}\,ds\r]^{1/2}M_\gamma(f)(x)
\ls M_\gamma(f)(x),\noz
\end{align}
where $A,\,N\in(0,\,\fz)$ such that $A>n+\gamma$ and $N>A-n$,
$M_\gamma$ is as in \eqref{eq-fm} and
$\gamma\in(0,\,\fz)$ is fixed later.

Next, we estimate $g_{L}^{\loc}$. Indeed, we have,
for any $f\in C_c^\fz(\rn)$ and $x\in\rn$,
\begin{align}\label{eq-3.2}
g_L^{\loc}(f)(x)
&\le\lf\{\int_0^{[\rho(x)]^2}\lf|tLe^{-tL}\lf(f{\mathbf 1}_{B_x}\r)(x)\r|^2\,\frac{dt}{t}\r\}^{1/2}
+\lf\{\int_{[\rho(x)]^2}^\fz\cdots\,\frac{dt}{t}\r\}^{1/2}\\
&\le\lf\{\int_0^{[\rho(x)]^2}\lf|\lf(tLe^{-tL}-t\Delta e^{-t\Delta}\r)
\lf(f{\mathbf 1}_{B_x}\r)(x)\r|^2\,\frac{dt}{t}\r\}^{1/2}\noz\\
&\hs+\lf\{\int_0^{[\rho(x)]^2}\lf|t\Delta e^{-t\Delta}
\lf(f{\mathbf 1}_{B_x}\r)(x)\r|^2\,\frac{dt}{t}\r\}^{1/2}
+\lf\{\int_0^{[\rho(x)]^2}\lf|tLe^{-tL}\lf(f{\mathbf 1}_{B_x}\r)(x)\r|^2\,\frac{dt}{t}\r\}^{1/2}\noz\\
&=:{\rm K}_1(x)+{\rm K}_2(x)+{\rm K}_3(x).\noz
\end{align}

For ${\rm K}_2$, it is easy to see that, for any $f\in C_c^\fz(\rn)$ and $x\in\rn$,
\begin{align}\label{eq-3.1}
K_2(x)\le g_{-\Delta}^{\loc}(f)(x),
\end{align}
where $g_{-\Delta}^{\loc}$ is as in \eqref{eq-loc3}.

For ${\rm K}_1$, by the perturbation formula as in \cite[p.\,578]{bhs11}
(see also \cite[Proposition 5.1]{dzi05}),
we find that there exists some $\delta\in(0,\,\fz)$
such that, for any $f\in C_c^\fz(\rn)$ and $x\in\rn$,
\begin{align}\label{eq-3.4}
\lf|\lf(tL e^{-tL}-t\Delta e^{-t\Delta}\r)(f{\mathbf 1}_{B_x})(x)\r|
\ls\int_{B_x}\frac{1}{t^{n/2}}\lf\{\lf[\frac{\sqrt{t}}{\rho(x)}\r]^{\delta}
+\lf[\frac{\sqrt{t}}{\rho(y)}\r]^{\delta}\r\}e^{-c\frac{|x-y|^2}{t}}|f(y)|\,dy.
\end{align}
Moreover, by the fact that $|x-y|<\rho(x)$ and Lemma \ref{lem aux}, we know that $\rho(x)\sim\rho(y)$.
From this and \eqref{eq-3.4}, it follows that, for any $f\in C_c^\fz(\rn)$ and $x\in\rn$,
\begin{align}\label{eq-3.0}
{\rm K}_1(x)
&\ls\lf[\int_0^{[\rho(x)]^2}\lf\{\int_{B_x}\frac{1}{t^{n/2}}
\lf[\frac{\sqrt{t}}{\rho(x)}\r]^{\delta}e^{-c\frac{|x-y|^2}{t}}|f(y)|\,dy\r\}^2\,\frac{dt}{t}\r]^{1/2}\\
&\ls\lf\{\int_0^{[\rho(x)]^2}\lf[\frac{\sqrt{t}}{\rho(x)}\r]^{2\delta}\,\frac{dt}{t}\r\}^{1/2}
R_{-\Delta}^{\loc}(|f|)(x)
\ls R_{-\Delta}^{\loc}(|f|)(x),\noz
\end{align}
where $R_{-\Delta}^{\loc}$ is as in \eqref{eq-loc2}.

For ${\rm K}_3$, by Lemma \ref{lem-8}, we obtain,
for any $f\in C_c^\fz(\rn)$ and $x\in\rn$,
\begin{align*}
{\rm K}_3(x)
&\ls\lf[\int_{[\rho(x)]^2}^\fz\lf\{\int_{B_x}\lf[1+\frac{\sqrt{t}}{\rho(x)}\r]^{-N}
\frac{1}{t^{n/2}}e^{-c\frac{|x-y|^2}{t}}|f(y)|\,dy\r\}^2\,\frac{dt}{t}\r]^{1/2}\\
&\ls\lf\{\int_{[\rho(x)]^2}^\fz\lf[1+\frac{\sqrt{t}}{\rho(x)}\r]^{-2N}\,\frac{dt}{t}\r\}^{1/2}
R_{-\Delta}^{\loc}(|f|)(x)
\ls R_{-\Delta}^{\loc}(|f|)(x).
\end{align*}
From this, \eqref{eq-3.2}, \eqref{eq-3.1} and \eqref{eq-3.0},
we deduce that, for any $f\in C_c^\fz(\rn)$ and $x\in\rn$,
$$g_{L}^{\loc}(f)(x)\ls g_{-\Delta}^{\loc}(f)(x)+R_{-\Delta}^{\loc}(|f|)(x).$$
This, combined with \eqref{eq-3.3}, implies that,
for any $f\in C_c^\fz(\rn)$ and $x\in\rn$,
$$g_{L}(f)(x)\ls M_\gamma(f)(x)+g_{-\Delta}^{\loc}(f)(x)+R_{-\Delta}^{\loc}(|f|)(x).$$

Let $\tz\in[0,\,\fz)$.
From Remark \ref{rem-2}(i), Lemmas \ref{lem-10} and \ref{lem-2}, it follows that,
for any $w\in A_3^{\rho,\,\tz}(\rn)$ and $f\in L^3(w)$,
\begin{align*}
\|g_L(f)\|_{L^3(w)}
&\ls \|M_\gamma(f)\|_{L^3(w)}+\lf\|g_{-\Delta}^{\loc}(f)\r\|_{L^3(w)}
+\lf\|R_{-\Delta}^{\loc}(|f|)\r\|_{L^3(w)}\\
&\ls[w]_{A_3^{\rho,\,\tz}(\rn)}^{1/2}\|f\|_{L^3(w)}+[w]_{A_3^{\rho,\,\loc}(\rn)}^{1/2}\|f\|_{L^3(w)}
\ls[w]_{A_3^{\rho,\,\tz}(\rn)}^{1/2}\|f\|_{L^3(w)},
\end{align*}
where we fix $\gamma:=\frac{3\tz}{2}$. This, together with Lemma \ref{lem-4}
and the fact that $C_c^\fz(\rn)$ is dense in $L^p(w)$,
implies that \eqref{eq-3.5} holds true.

\emph{Step 2}. In this step, we show that, for any given $p\in(1,\,\fz)$ and $\tz\in[0,\,\fz)$,
and for any $w\in A_p^{\rho,\,\tz}(\rn)$ and $f\in C_c^\fz(\rn)$,
\begin{align}\label{eq-3.6}
\lf\|g_{L,\,\lz}^{\ast}(f)\r\|_{L^p(w)}
\ls[w]_{A_p^{\rho,\,\tz}(\rn)}^{\max\{\frac12,\,\frac{1}{p-1}\}}\|f\|_{L^p(w)}.
\end{align}
Indeed, for any $f\in C_c^\fz(\rn)$ and $x\in\rn$, we have
\begin{align*}
\lf[g_{L,\,\lambda}^\ast(f)(x)\r]^2
&=\int_0^\fz\int_\rn\lf(\frac{t}{t+|x-y|}\r)^{\lambda n}
\lf|t^2L e^{-t^2L}(f)(y)\r|^2\,\dydt\\
&=\int_0^\fz\int_{B(x,\,t)}\lf(\frac{t}{t+|x-y|}\r)^{\lambda n}\lf|t^2L e^{-t^2L}(f)(y)\r|^2\,\dydt\\
&\hs+\sum_{j=1}^\fz\int_0^\fz\int_{B(x,\,2^jt)\setminus B(x,\,2^{j-1}t)}\ldots\,\dydt\\
&\ls\lf[S_L(f)(x)\r]^2+\sum_{j=1}^\fz2^{-j\lambda n}\lf[S_{L,\,2^j}(f)(x)\r]^2,
\end{align*}
namely,
\begin{align}\label{eq-2.1}
g_{L,\,\lambda}^\ast(f)(x)
\ls \sum_{j=0}^\fz 2^{-j\lambda n/2}S_{L,\,2^j}(f)(x),
\end{align}
where $S_{L,\,2^j}$ is as in \eqref{eq-0.5} with $\az:=2^j$ therein.

Next, we estimate $S_{L,\,2^j}$. To this end, let $\az\in[1,\,\fz)$.
For any $f\in C_c^\fz(\rn)$ and $x\in\rn$, we write
\begin{align}\label{eq-2.8}
S_{L,\,\az}(f)(x)
=S_{L,\,\az}(f{\mathbf 1}_{2B_x})(x)+S_{L,\,\az}\lf(f{\mathbf 1}_{(2B_x)^\com}\r)(x)
=:S_{L,\,\az}^{\loc}(f)(x)+S_{L,\,\az}^{\rm glob}(f)(x),
\end{align}
where $B_x$ is as in \eqref{eq-rb}.

To deal with $S_{L,\,\az}^{\rm glob}(f)$, we have,
for any $f\in C_c^\fz(\rn)$ and $x\in\rn$,
\begin{align*}
S_{L,\,\az}^{\rm glob}(f)(x)
&\le\lf\{\int_0^{\rho(x)/\az}\int_{|x-y|<\az t}\lf|t^2L e^{t^2L}(f{\mathbf 1}_{(2B_x)^\com})(y)
\r|^2\,\dydt\r\}^{\frac12}
+\lf\{\int_{\rho(x)/\az}^\fz\int_{|x-y|<\az t}\cdots\,\dydt\r\}^{\frac12}\\
&={\rm I}(x)+{\rm II}(x).
\end{align*}
First, we estimate ${\rm I}$. Indeed, for any $f\in C_c^\fz(\rn)$ and $x\in\rn$,
it holds true that
\begin{align}\label{eq-1.1}
{\rm I}(x)
&=\lf\{\int_0^{\rho(x)/\az}\int_{|x-y|<\az t}\lf|
\int_\rn t^2\frac{\partial}{\partial s}K_s(y,\,z)\Big|_{s=t^2}f(z){\mathbf 1}_{(2B_x)^\com}(z)\,dz\r|^2\,\dydt\r\}^{1/2}\\
&\le\lf\{\int_0^{\rho(x)/\az}\int_{|x-y|<\az t}\lf[\sum_{k=2}^\fz
\int_{2^k B_x\setminus 2^{k-1}B_x}
\lf|t^2\frac{\partial}{\partial s}K_s(y,\,z)\Big|_{s=t^2}\r||f(z)|\,dz\r]^2\,\dydt\r\}^{1/2},\noz
\end{align}
where $K_s(\cdot,\,\cdot)$ denotes the integral kernel
of $e^{-sL}$ as in Lemma \ref{lem-8}.
For any $z\in(2^kB_x)\setminus (2^{k-1}B_x)$ and $y\in B(x,\,\az t)$, it is easy to see that
\begin{align}\label{eq-1.1x}
|y-z|\geq |x-z|-|y-x|\geq 2^{k-1}\rho(x)-\az t
\geq 2^{k-1}\rho(x)-\rho(x)\geq 2^{k-2}\rho(x).
\end{align}
By the fact that $\az\in[1,\,\fz)$ and Lemma \ref{lem aux}, we know that, for any $y\in B(x,\,\az t)$,
\begin{align}\label{eq-1.1y}
\lf[1+\frac{t}{\rho(y)}\r]^{-1}
&\le\lf[\frac1\az+\frac{t}{\rho(y)}\r]^{-1}
=\az\lf[1+\frac{\az t}{\rho(y)}\r]^{-1}\\
&\ls\az\lf\{1+\frac{\az t}{\rho(x)}\lf[1+\frac{|x-y|}{\rho(x)}\r]^{-\frac{N_0}{N_0+1}}\r\}^{-1}
\ls\az\lf[1+\frac{\az t}{\rho(x)}\r]^{-\frac{1}{N_0+1}},\noz
\end{align}
where $N_0\in(0,\,\fz)$ is as in Lemma \ref{lem aux}.
From \eqref{eq-1.1y}, \eqref{eq-1.1x} and Lemma \ref{lem-8}, we deduce that,
for any $f\in C_c^\fz(\rn)$ and $x\in\rn$,
\begin{align*}
&\sum_{k=2}^\fz
\int_{2^k B_x\setminus 2^{k-1}B_x}
\lf|t^2\frac{\partial}{\partial s}K_s(y,\,z)\Big|_{s=t^2}\r||f(z)|\,dz\\
&\hs\ls\sum_{k=2}^\fz t^{-n}\lf[1+\frac{t}{\rho(y)}\r]^{-N}
\int_{2^k B_x\setminus 2^{k-1}B_x}e ^{-c\frac{[2^k\rho(x)]^2}{t^2}}|f(z)|\,dz\\
&\hs\ls\az^N\sum_{k=2}^\fz t^{-n}\lf[1+\frac{\az t}{\rho(x)}\r]^{-N/(N_0+1)}
\int_{2^k B_x\setminus 2^{k-1}B_x}\lf[\frac{t}{2^k\rho(x)}\r]^{A}|f(z)|\,dz\\
&\hs\ls\az^N\sum_{k=2}^\fz 2^{-k(A-n-\gamma)}\lf[\frac{t}{\rho(x)}\r]^{A-n}
\lf[1+\frac{\az t}{\rho(x)}\r]^{-N/(N_0+1)}\lf[1+\frac{2^k\rho(x)}{\rho(x)}\r]^{-\gamma}
\frac{1}{|2^kB_x|}\int_{2^kB_x}|f(z)|\,dz\\
&\hs\ls\az^N\lf[\frac{t}{\rho(x)}\r]^{A-n}
\lf[1+\frac{\az t}{\rho(x)}\r]^{-N/(N_0+1)}M_{\gamma}(f)(x),
\end{align*}
where $N,\,\gamma\in (0,\,\fz)$ are fixed later,
$M_\gamma$ is as in \eqref{eq-fm}
and $A$ a constant such that $A>n+\gamma$.
This, together with \eqref{eq-1.1}, implies that, for any $f\in C_c^\fz(\rn)$ and $x\in\rn$,
\begin{align}\label{eq-1.10}
{\rm I}(x)
&\ls\az^{N-A+n}\lf[\int_0^{\rho(x)/\az}\int_{|x-y|<\az t}\lf\{\lf[\frac{\az t}{\rho(x)}\r]^{A-n}
\lf[1+\frac{\az t}{\rho(x)}\r]^{-N/(N_0+1)}M_{\gamma}(f)(x)\r\}^2\,\dydt\r]^{1/2}\\
&\ls\az^{N-A+\frac{3n}{2}}M_{\gamma}(f)(x)\lf[\int_0^{1} \frac{s^{2(A-n)-1}}{(1+s)^{2N/(N_0+1)}}\,ds\r]^{1/2}
\ls\az^{N-A+\frac{3n}{2}}M_{\gamma}(f)(x).\noz
\end{align}

For ${\rm II}$, we have, for any $f\in C_c^\fz(\rn)$ and $x\in\rn$,
\begin{align}\label{eq-1.13}
{\rm II}(x)
&=\lf\{\int_{\rho(x)/\az}^\fz\int_{|x-y|<\az t}\lf|
\int_\rn t^2\frac{\partial}{\partial s}K_s(y,\,z)\Big|_{s=t^2}f(z){\mathbf 1}_{(2B_x)^\com}(z)\,dz\r|^2\,\dydt\r\}^{1/2}\\
&\le\lf\{\int_{\rho(x)/\az}^\fz\int_{|x-y|<\az t}\lf[\sum_{k=2}^\fz
\int_{2^k B_x\setminus 2^{k-1}B_x}
\lf|t^2\frac{\partial}{\partial s}K_s(y,\,z)\Big|_{s=t^2}\r||f(z)|\,dz\r]^2\,\dydt\r\}^{1/2}.\noz
\end{align}
Now we consider two cases.

Case 1) $|y-z|<\az t$. In this case, we then obtain
$2\az t>|y-z|+|y-x|\geq|x-z|\geq 2^{k-1}\rho(x)$.
From this, Lemma \ref{lem-8} and \eqref{eq-1.1y}, we deduce that,
for any $f\in C_c^\fz(\rn)$ and $x\in\rn$,
\begin{align}\label{eq-1.12}
&\sum_{k=2}^\fz
\int_{2^k B_x\setminus 2^{k-1}B_x}
\lf|t^2\frac{\partial}{\partial s}K_s(y,\,z)\Big|_{s=t^2}\r||f(z)|\,dz\\
&\hs\ls\sum_{k=2}^\fz t^{-n}\lf[1+\frac{t}{\rho(y)}\r]^{-N}
\int_{2^k B_x\setminus 2^{k-1}B_x}|f(z)|\,dz\noz\\
&\hs\ls\az^N\sum_{k=2}^\fz t^{-n}\lf[1+\frac{\az t}{\rho(x)}\r]^{-\frac{N}{N_0+1}}
\int_{2^k B_x\setminus 2^{k-1}B_x}|f(z)|\,dz\noz\\
&\hs\ls\az^N\sum_{k=2}^\fz \lf[\frac{\rho(x)}{t}\r]^{n-\tau}\lf[\frac{\rho(x)}{t}\r]^{\tau}
2^{-k[\frac{N}{N_0+1}-n-\gamma]}\lf[1+\frac{2^k\rho(x)}{\rho(x)}\r]^{-\gamma}
\frac{1}{|2^k B_x|}\int_{2^kB_x}|f(z)|\,dz\noz\\
&\hs\ls\az^N\sum_{k=2}^\fz \lf[\frac{\rho(x)}{t}\r]^{\tau}
2^{-k[\frac{N}{N_0+1}-\tau-\gamma]}M_\gamma(f)(x)
\ls\az^{N+\tau}\lf[\frac{\rho(x)}{\az t}\r]^\tau M_\gamma(f)(x),\noz
\end{align}
where $M_\gamma$ is as in \eqref{eq-1.10}, $N>(N_0+1)(\gamma+\tau)$
and $\tau\in(0,\,n)$ is fixed later.

Case 2) $|y-z|\geq \az t$. In this case, we find that
$$2|y-z|\geq|y-z|+\az t\geq |y-z|+|x-y|\geq|x-z|\geq 2^{k-1}\rho(x).$$
From this, \eqref{eq-1.1y} and Lemma \ref{lem-8}, it follows that,
for any $f\in C_c^\fz(\rn)$ and $x\in\rn$,
\begin{align*}
&\sum_{k=2}^\fz
\int_{2^k B_x\setminus 2^{k-1}B_x}
\lf|t^2\frac{\partial}{\partial s}K_s(y,\,z)\Big|_{s=t^2}\r||f(z)|\,dz\\
&\hs\ls\sum_{k=2}^\fz t^{-n}\lf[1+\frac{t}{\rho(y)}\r]^{-N}
\int_{2^kB_x\setminus 2^{k-1}B_x} e^{-c\frac{|y-z|^2}{t^2}}|f(z)|\,dz\\
&\hs\ls\az^N\sum_{k=2}^\fz t^{-n}\lf[1+\frac{\az t}{\rho(x)}\r]^{-\frac{N}{N_0+1}}
\lf[\frac{t}{2^k\rho(x)}\r]^{A}\int_{2^kB_x}|f(z)|\,dz\\
&\hs\ls\az^N\sum_{k=2}^\fz\lf[1+\frac{\az t}{\rho(x)}\r]^{-\frac{N}{N_0+1}}
2^{-k(A-\gamma-n)}\lf[\frac{t}{\rho(x)}\r]^{A-n}
\lf[1+\frac{2^k\rho(x)}{\rho(x)}\r]^{-\gamma}\frac{1}{|2^kB_x|}\int_{2^kB_x}|f(z)|\,dz\\
&\hs\ls\az^{N-A+n}\lf[1+\frac{\az t}{\rho(x)}\r]^{-\frac{N}{N_0+1}}
\lf[\frac{\az t}{\rho(x)}\r]^{A-n}M_\gamma(f)(x),
\end{align*}
where $A>n+\gamma$.
From this, \eqref{eq-1.12} and \eqref{eq-1.13}, we deduce that,
for any $f\in C_c^\fz(\rn)$ and $x\in\rn$,
\begin{align*}
{\rm II}(x)
&\ls\az^{N+\tau}\lf[\int_{\rho(x)/\az}^\fz\int_{|x-y|<\az t}\lf\{\lf[\frac{\rho(x)}{\az t}\r]^{2\tau}
+\lf[\frac{\az t}{\rho(x)}\r]^{2(A-n)}
\lf[1+\frac{\az t}{\rho(x)}\r]^{-\frac{2N}{N_0+1}}\r\}\,\dydt\r]^{1/2}M_\gamma(f)(x)\\
&\ls\az^{N+\tau+\frac{n}{2}}\lf\{\int_1^\fz\lf[s^{-2\tau}+s^{2(A-n)}(1+s)^{-\frac{2N}{N_0+1}}\r]
\,\frac{ds}{s}\r\}^{1/2}M_\gamma(f)(x)
\ls \az^{N+\tau+\frac{n}{2}}M_\gamma(f)(x),
\end{align*}
where we choose $A:=n+\gamma+\tau$ and $N>(N_0+1)(A-n)=(N_0+1)(\gamma+\tau)$.
This, combined with \eqref{eq-1.10}, implies that, for any $f\in C_c^\fz(\rn)$ and $x\in\rn$,
\begin{align}\label{eq-2.7}
S_{L,\,\az}^{\rm glob}(f)(x)\ls \az^{N+\frac{3n}{2}}M_\gamma(f)(x).
\end{align}

Next, we estimate $S_{L,\,\az}^{\loc}(f)$.
For any $f\in C_c^\fz(\rn)$ and $x\in\rn$, write
\begin{align}\label{eq-2.4}
S_{L,\,\az}^{\loc}(f)(x)
&\le\lf\{\int_0^{\rho(x)/\az}\int_{|x-y|<\az t}\lf|t^2L e^{-t^2L}(f{\mathbf 1}_{2B_x})(y)\r|^2\,\dydt\r\}^{1/2}\\
&\hs+\lf\{\int_{\rho(x)/\az}^\fz\int_{|x-y|<\az t}\cdots\,\dydt\r\}^{1/2}\noz\\
&\le\lf\{\int_0^{\rho(x)/\az}\int_{|x-y|<\az t}
\lf|\lf(t^2L e^{-t^2L}-t^2\Delta e^{-t^2\Delta}\r)(f{\mathbf 1}_{2B_x})(y)\r|^2\,\dydt\r\}^{1/2}\noz\\
&\hs+\lf\{\int_0^{\rho(x)/\az}\int_{|x-y|<\az t}
\lf|t^2\Delta e^{-t^2\Delta}(f{\mathbf 1}_{2B_x})(y)\r|^2\,\dydt\r\}^{1/2}\noz\\
&\hs+\lf\{\int_{\rho(x)/\az}^\fz
\int_{|x-y|<\az t}\lf|t^2L e^{-t^2L}(f{\mathbf 1}_{2B_x})(y)\r|^2\,\dydt\r\}^{1/2}\noz\\
&=:{\rm J}_1(x)+{\rm J}_2(x)+{\rm J}_3(x).\noz
\end{align}

For ${\rm J}_2$, it is easy to see that, for any $f\in C_c^\fz(\rn)$ and $x\in\rn$,
\begin{align}\label{eq-2.3}
{\rm J}_2(x)\le S_{-\Delta,\,\az}^{\loc}(f)(x),
\end{align}
where $S_{-\Delta,\,\az}^{\loc}$ is as in \eqref{eq-sl}.

For ${\rm J}_1$, by \cite[Proposition 5.1]{dzi05},
we find that there exists some $\delta\in(0,\,\fz)$
such that, for any $f\in C_c^\fz(\rn)$, $x\in\rn$ and $y\in B(x,\,\az t)$,
\begin{align*}
\lf|\lf(t^2L e^{-t^2L}-t^2\Delta e^{-t^2\Delta}\r)(f{\mathbf 1}_{2B_x})(y)\r|
\ls\int_{2B_x}\frac{1}{t^n}\lf\{\lf[\frac{t}{\rho(y)}\r]^{\delta}
+\lf[\frac{t}{\rho(z)}\r]^{\delta}\r\}e^{-c\frac{|y-z|^2}{t^2}}|f(z)|\,dz.
\end{align*}
Noticing that $0<\az t<\rho(x)$,
$|x-y|<\az t$ and $|x-z|<2\rho(x)$, by Lemma \ref{lem aux},
we know that $\rho(x)\sim\rho(y)\sim\rho(z)$.
Hence, for any $f\in C_c^\fz(\rn)$ and $x\in\rn$, it holds true that
\begin{align}\label{eq-2.5}
{\rm J}_1(x)
&\ls\lf\{\int_0^{\rho(x)/\az}\int_{|x-y|<\az t}\lf|\int_{2B_x}
\frac{1}{t^n}\lf[\frac{t}{\rho(x)}\r]^{\delta}e^{-c\frac{|y-z|^2}{t^2}}|f(z)|\,dz
\r|^2\,\dydt\r\}^{1/2}\\
&\ls\az^{n/2}\lf\{\int_0^{\rho(x)/\az}\lf[\frac{t}{\rho(x)}\r]^{2\delta}\,\frac{dt}{t}\r\}^{1/2}
M_{-\Delta,\,\az}^{\ast,\,\loc}(|f|)(x)
\ls \az^{n/2}M_{-\Delta,\,\az}^{\ast,\,\loc}(|f|)(x),\noz
\end{align}
where $M_{-\Delta,\,\az}^{\ast,\,\loc}$ is as in \eqref{eq-loc4}.

For ${\rm J}_3$, by \eqref{eq-1.1y}, we have,
for any $f\in C_c^\fz(\rn)$ and $x\in\rn$,
\begin{align}\label{eq-2.6}
{\rm J}_3(x)
&\ls\lf\{\int_{\rho(x)/\az}^\fz\int_{|x-y|<\az t}\lf|\int_{2B_x}
\lf[1+\frac{t}{\rho(y)}+\frac{t}{\rho(z)}\r]^{-N}t^{-n}e^{-c\frac{|y-z|^2}{t^2}}
|f(z)|\,dz\r|^2\,\dydt\r\}^{1/2}\\
&\ls\lf\{\int_{\rho(x)/\az}^\fz\int_{|x-y|<\az t}
\lf[1+\frac{t}{\rho(y)}\r]^{-2N}\lf[M_{-\Delta,\,\az}^{\ast,\,\loc}(|f|)(x)\r]^2\,\dydt\r\}^{1/2}\noz\\
&\ls\lf\{\int_{\rho(x)/\az}^\fz\int_{|x-y|<\az t}
\az^{2N}\lf[1+\frac{\az t}{\rho(x)}\r]^{-\frac{2N}{N_0+1}}\,\dydt\r\}^{1/2}
M_{-\Delta,\,\az}^{\ast,\,\loc}(|f|)(x)\noz\\
&\ls \az^{N+\frac{n}{2}}M_{-\Delta,\,\az}^{\ast,\,\loc}(|f|)(x),\noz
\end{align}
where $N\in(0,\,\fz)$. Combining \eqref{eq-2.6}, \eqref{eq-2.5}, \eqref{eq-2.3} and \eqref{eq-2.4}, we find that,
for any $f\in C_c^\fz(\rn)$ and $x\in\rn$,
$$S_{L,\,\az}^{\loc}(f)(x)\ls \az^{N+\frac{n}{2}}
M_{-\Delta,\,\az}^{\ast,\,\loc}(|f|)(x)+ S_{-\Delta,\,\az}^{\loc}(f)(x).$$
From this, \eqref{eq-2.7} and \eqref{eq-2.8}, we deduce that, for any $\az\in[1,\,\fz)$,
$f\in C_c^\fz(\rn)$ and $x\in\rn$,
\begin{align}\label{eq-2.9}
S_{L,\,\az}(f)(x)\ls \az^{N+\frac{3n}{2}}M_\gamma(f)(x)+\az^{N+\frac{n}{2}}
M_{-\Delta,\,\az}^{\ast,\,\loc}(|f|)(x)+ S_{-\Delta,\,\az}^{\loc}(f)(x),
\end{align}
where $N>(N_0+1)(\gamma+\tau)$, $N_0\in(0,\,\fz)$ is as in Lemma \ref{lem aux},
$\gamma\in(0,\,\fz)$ and $\tau\in(0,\,n)$.

Let $\tz\in[0,\,\fz)$ and $w\in A_3^{\rho,\,\tz}(\rn)$.
Fix $\gamma:=\frac{3\tz}{2}$ in \eqref{eq-2.9}.
By the fact that $\lambda\in(3+\frac{2}{n}\max\{\frac{3\tz}{2}k_0,\,1\},\,\fz)$,
$k_0:=\max\{\frac{\log_2{C_0}+2-n}{2-n/q},\,1\}$,
and Remark \ref{rem-7}, we could choose some $N_0\in(0,\,\fz)$ in Lemma \ref{lem aux} such that
$\lambda\in(3+\frac{2}{n}\max\{(N_0+1)\frac{3\tz}{2},\,1\},\,\fz).$
Then we could fix some $\tau\in(0,\,n)$ small enough and $N>(N_0+1)(\tau+\gamma)$ such that
$\lambda\in(3+\frac{2}{n}\max\lf\{N,\,1\r\},\,\fz)$.
From this, \eqref{eq-2.9}, \eqref{eq-2.1}, Lemma \ref{lem-10},
Remark \ref{rem-2}(i) and Lemma \ref{lem-2},
we deduce that, for any $f\in C_c^\fz(\rn)$,
\begin{align*}
\lf\|g_{L,\,\lambda}^\ast(f)\r\|_{L^3(w)}
&\ls\sum_{j=0}^\fz 2^{-j\lambda n/2}\lf\|S_{L,\,2^j}(f)\r\|_{L^3(w)}\\
&\ls\sum_{j=0}^\fz 2^{-j\lambda n/2}
\lf\{2^{j(N+\frac{3n}{2})}\lf\|M_\gamma(f)\r\|_{L^3(w)}
+2^{j(N+\frac{n}{2})}\lf\|M_{-\Delta,\,\az}^{\ast,\,\loc}(f)\r\|_{L^3(w)}
+\lf\|S_{-\Delta,\,\az}^{\loc}(f)\r\|_{L^3(w)}\r\}\\
&\ls\sum_{j=0}^\fz 2^{-j\lambda n/2}\lf\{
2^{j(N+\frac{3n}{2})}[w]_{A_3^{\rho,\,\tz}(\rn)}^{1/2}
+2^{j(N+\frac{3n}{2})}[w]_{A_3^{\rho,\,\loc}(\rn)}^{1/2}
+2^{j(1+\frac{3n}{2})}[w]_{A_3^{\rho,\,\loc}(\rn)}^{1/2}\r\}\\
&\hs\times\|f\|_{L^3(w)}\\
&\ls[w]_{A_3^{\rho,\,\tz}(\rn)}^{1/2}\|f\|_{L^3(w)}.
\end{align*}
This, together with Lemma \ref{lem-4} and the fact that $C_c^\fz(\rn)$ is dense in $L^p(w)$,
implies that \eqref{eq-3.6} holds true.

\emph{Step 3}.
Finally, by \eqref{eq-3.6} and the fact that, for any $f\in C_c^\fz(\rn)$ and $x\in\rn$,
$S_L(f)(x)\ls g_{L,\,\lz}^\ast(f)(x)$, we know that,
for any $w\in A_p^{\rho,\,\tz}(\rn)$ and $f\in C_c^\fz(\rn)$,
$$\|S_L(f)\|_{L^p(w)}\ls[w]_{A_p^{\rho,\,\tz}(\rn)}^{\max\{\frac12,\,\frac{1}{p-1}\}}\|f\|_{L^p(w)}.$$
This finishes the proof of Theorem \ref{thm-1}.
\end{proof}

Next, we prove Theorem \ref{thm-2}. To this end, we first introduce the following
lemma, which is a corollary of Lemma \ref{lem-10}.
In what follows, for any measurable function $w$
and any measurable subset $E$ of $\rn$, we write
$w(E):=\int_E w(y)\,dy$.
\begin{lemma}\label{lem-11}
Let $p\in(1,\,\fz)$, $1/p+1/p'=1$, $\tz\in[0,\,\fz)$, $n\geq 3$, $\rho$ be as in \eqref{eq aux} with $V\in RH_q(\rn)$
and $q\in(n/2,\,\fz)$, and $w\in A_p^{\rho,\,\tz}(\rn)$.
Then there exists a positive constant $C$ such that,
for any $\lz\in[1,\,\fz)$ and ball $B(x_B,\,r_B)$ of $\rn$,
\begin{align*}
w(B(x_B,\,\lz r_B))\le C[w]_{A_p^{\rho,\,\tz}(\rn)}^{\frac{p}{p-1}}
\lz^{pn}\lf[1+\frac{\lz r_B}{\rho(x_B)}\r]^{p'p\tz}w(B(x_B,\,r_B)).
\end{align*}
\end{lemma}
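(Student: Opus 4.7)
The plan is to derive Lemma \ref{lem-11} from the quantitative weighted bound for the Hardy--Littlewood type maximal operator $M_\tz$ in Lemma \ref{lem-10} by the classical trick of testing against the characteristic function of a ball.

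First, I would select an appropriate parameter to feed into Lemma \ref{lem-10}. Set $\tz':=p'\tz$ so that $\tz'/p'=\tz$, and apply Lemma \ref{lem-10} with $\tz'$ in place of $\tz$ (so that $\eta=\tz$); this yields, for any $w\in A_p^{\rho,\,\tz}(\rn)$ and $f\in L^p(w)$,
\begin{align*}
\lf\|M_{p'\tz}(f)\r\|_{L^p(w)}\ls [w]_{A_p^{\rho,\,\tz}(\rn)}^{\frac{1}{p-1}}\|f\|_{L^p(w)}.
\end{align*}
Next, for any ball $B(x_B,\,r_B)$ and any $\lambda\in[1,\,\fz)$, the ball $B(x_B,\,\lambda r_B)$ contains every point $x\in B(x_B,\,\lambda r_B)$, so by the very definition \eqref{eq-fm} of $M_{p'\tz}$ one has, for each such $x$,
\begin{align*}
M_{p'\tz}({\mathbf 1}_{B(x_B,\,r_B)})(x)
\geq \frac{|B(x_B,\,r_B)|}{\Psi_{p'\tz}(B(x_B,\,\lambda r_B))|B(x_B,\,\lambda r_B)|}.
\end{align*}

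From this pointwise lower bound I would obtain the pointwise domination
\begin{align*}
{\mathbf 1}_{B(x_B,\,\lambda r_B)}(x)
\le \frac{\Psi_{p'\tz}(B(x_B,\,\lambda r_B))|B(x_B,\,\lambda r_B)|}{|B(x_B,\,r_B)|}
M_{p'\tz}({\mathbf 1}_{B(x_B,\,r_B)})(x),
\end{align*}
which, after raising to the $p$-th power and integrating against $w$, gives
\begin{align*}
w(B(x_B,\,\lambda r_B))
\le\lf[\frac{\Psi_{p'\tz}(B(x_B,\,\lambda r_B))|B(x_B,\,\lambda r_B)|}{|B(x_B,\,r_B)|}\r]^p
\lf\|M_{p'\tz}({\mathbf 1}_{B(x_B,\,r_B)})\r\|_{L^p(w)}^p.
\end{align*}
Applying the bound for $M_{p'\tz}$ from the first step with $f={\mathbf 1}_{B(x_B,\,r_B)}$ turns the right-hand side into
\begin{align*}
\lf[\frac{\Psi_{p'\tz}(B(x_B,\,\lambda r_B))|B(x_B,\,\lambda r_B)|}{|B(x_B,\,r_B)|}\r]^p
[w]_{A_p^{\rho,\,\tz}(\rn)}^{\frac{p}{p-1}}w(B(x_B,\,r_B)),
\end{align*}
and recognizing $|B(x_B,\,\lambda r_B)|/|B(x_B,\,r_B)|=\lambda^n$ together with $\Psi_{p'\tz}(B(x_B,\,\lambda r_B))=[1+\lambda r_B/\rho(x_B)]^{p'\tz}$ produces exactly the claimed inequality.

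There is no real obstacle; the only subtle point is the correct bookkeeping of the parameter in Lemma \ref{lem-10}, which specifies $\eta=\tz/p'$, and hence forces the choice $\tz'=p'\tz$ in order to match the hypothesis $w\in A_p^{\rho,\,\tz}(\rn)$. This same choice is what produces the exponent $p'p\tz$ on $[1+\lambda r_B/\rho(x_B)]$ in the conclusion.
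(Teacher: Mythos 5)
Your proof is correct and is precisely the argument the paper has in mind: the text explicitly introduces the lemma ``which is a corollary of Lemma \ref{lem-10}'' and then omits the details, and your derivation (testing $M_{p'\tz}$ on ${\mathbf 1}_{B(x_B,r_B)}$, using the reproducing ball $B(x_B,\lambda r_B)$, and invoking the $[w]^{1/(p-1)}$ bound for $M_{p'\tz}$) produces exactly the stated exponents $[w]^{p/(p-1)}$, $\lambda^{pn}$ and $[1+\lambda r_B/\rho(x_B)]^{p'p\tz}$. Your bookkeeping of the parameter shift $\tz\mapsto p'\tz$ needed so that the hypothesis class in Lemma \ref{lem-10} becomes $A_p^{\rho,\tz}(\rn)$ is the one subtle point, and you handle it correctly.
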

The proof of Lemma \ref{lem-11} is quite similar to
the classical one (see, for instance, \cite[p.\,133]{d00})
and we omit the details.

\begin{proof}[Proof of Theorem \ref{thm-2}]
Let $\mu\in[0,\,\fz)$.
We first introduce an auxiliary function $\wz{S}_L^\mu$, which is defined by setting,
for any $f\in C_c^\fz(\rn)$ and $x\in\rn$,
\begin{align*}
\wz{S}_L^\mu(f)(x):=\lf\{\int_0^\fz\int_{|x-y|<t}\lf[1+\frac{t}{\rho(y)}\r]^\mu
\lf|t^2Le^{-t^2L}(f)(y)\r|^2\,\dydt\r\}^{1/2}.
\end{align*}
For $\wz{S}_L^\mu$, by Lemmas \ref{lem aux} and \ref{lem-8}, we conclude that,
for any $f\in C_c^\fz(\rn)$ and $x\in\rn$,
\begin{align*}
&\lf\{\int_0^{\rho(x)}\int_{|x-y|<t}\lf[1+\frac{t}{\rho(y)}\r]^\mu
\lf|t^2Le^{-t^2L}(f)(y)\r|^2\,\dydt\r\}^{1/2}\\
&\hs\ls\lf\{\int_0^{\rho(x)}\int_{|x-y|<t}\lf|t^2Le^{-t^2L}(f)(y)\r|^2\,\dydt\r\}^{1/2}
\end{align*}
and
\begin{align*}
&\lf\{\int_{\rho(x)}^\fz\int_{|x-y|<t}\lf[1+\frac{t}{\rho(y)}\r]^\mu
\lf|t^2Le^{-t^2L}(f)(y)\r|^2\,\dydt\r\}^{1/2}\\
&\hs\ls\lf\{\int_{\rho(x)}^\fz\int_{|x-y|<t}\lf|\int_\rn \lf[1+\frac{t}{\rho(y)}\r]^{-(N-\mu)}
e^{-c\frac{|y-z|^2}{t^2}}|f(z)|\,dz\r|^2\,\dydt\r\}^{1/2},
\end{align*}
where $N\in(0,\,\fz)$ is arbitrary.
Let $p\in(1,\,\fz)$ and $\tz\in[0,\,\fz)$.
By the above two inequalities and an argument similar to that used in \emph{Step 2} of
the proof of Theorem \ref{thm-1},
we find that, for any $w\in A_p^{\rho,\,\tz}(\rn)$ and $f\in L^p(w)$,
\begin{align}\label{eq-3.8}
\lf\|\wz{S}_L^\mu(f)\r\|_{L^p(w)}
\ls[w]_{A_p^{\rho,\,\tz}(\rn)}^{\max\{\frac12,\,\frac{1}{p-1}\}}\|f\|_{L^p(w)}.
\end{align}

Next, we show that there exists a positive constant $C$ such that,
for any $\az\in[1,\,\fz)$, $w\in A_p^{\rho,\,\tz}(\rn)$ and $f\in L^p(w)$,
\begin{align}\label{eq-3.7}
\lf\|S_{L,\,\az}(f)\r\|_{L^p(w)}
\le C\az^{n+2\tz}[w]_{A_p^{\rho,\,\tz}(\rn)}^{\max\{1,\,\frac{1}{p-1}\}}
\lf\|\wz{S}_L^{4\tz}(f)\r\|_{L^p(w)}.
\end{align}
Indeed, by the Fubini theorem, Lemma \ref{lem-11} and the fact that $\az\in[1,\,\fz)$,
we find that, for any $w\in A_{2}^{\rho,\,\tz}(\rn)$ and $f\in C_c^\fz(\rn)$,
\begin{align*}
\lf\|S_{L,\,\az}(f)\r\|_{L^2(w)}^2
&=\int_\rn\int_0^\fz\int_{|x-y|<\az t}\lf|t^2Le^{-t^2L}(f)(y)\r|^2\,\dydt\,w(x)\,dx\\
&=\int_\rn\int_0^\fz w(B(y,\,\az t))\lf|t^2Le^{-t^2L}(f)(y)\r|^2\,\dydt\\
&\ls\az^{2n}[w]_{A_{2}^{\rho,\,\tz}(\rn)}^{2}
\int_\rn\int_0^\fz \lf[1+\frac{\az t}{\rho(y)}\r]^{4\tz}w(B(y,\,t))
\lf|t^2Le^{-t^2L}(f)(y)\r|^2\,\dydt\\
&\ls\az^{2(n+2\tz)}[w]_{A_{2}^{\rho,\,\tz}(\rn)}^{2}\int_\rn\int_0^\fz w(B(y,\,t))
\lf[1+\frac{t}{\rho(y)}\r]^{4\tz}\lf|t^2Le^{-t^2L}(f)(y)\r|^2\,\dydt\\
&\sim \az^{2(n+2\tz)}[w]_{A_{2}^{\rho,\,\tz}(\rn)}^{2}\lf\|\wz{S}_L^{4\tz}(f)\r\|_{L^2(w)}^2.
\end{align*}
Choose $\mathcal{F}$ to be the family of all pairs
$(\wz{f},\,\wz{g}):=[S_{L,\,\az}(f),\,\az^{n+2\tz}\wz{S}_L^{4\tz}(f)]$ with $f\in C_c^\fz(\rn)$.
Then we obtain
\begin{align*}
\int_\rn\lf[\wz{f}(x)\r]^{2}\,dx
&=\int_\rn\lf[S_{L,\,\az}(f)(x)\r]^2w(x)\,dx\\
&\ls\az^{(n+2\tz)r_0}[w]_{A_{2}^{\rho,\,\tz}(\rn)}^{2}\int_\rn \lf[\wz{S}_L^{4\tz}(f)(x)\r]^2w(x)\,dx\\
&\sim[w]_{A_{2}^{\rho,\,\tz}(\rn)}^{2}\int_\rn\lf[\wz{g}(x)\r]^{2}w(x)\,dx.
\end{align*}
This, combined with Remark \ref{rem-1}(ii), implies that,
for any given $p\in(1,\,\fz)$ and for any
$w\in A_p^{\rho,\,\tz}(\rn)$ and $f\in C_c^\fz(\rn)$,
\begin{align*}
\int_\rn\lf[S_{L,\,\az}(f)(x)\r]^{p}w(x)\,dx
&\ls [w]_{A_{p}^{\rho,\,\tz}(\rn)}^{p\max\{1,\,\frac{1}{p-1}\}}
\int_\rn \az^{(n+2\tz)p}\lf[\wz{S}_{L}^{4\tz}(f)(x)\r]^{p}w(x)\,dx.
\end{align*}
This, together with the fact that $C_c^\fz(\rn)$ is dense in $L^p(w)$, implies that
\eqref{eq-3.7} holds true.

From \eqref{eq-3.7}, \eqref{eq-3.8}, \eqref{eq-2.1} and the fact that
$\lz\in(2[1+\frac{2\tz}{n}],\,\fz)$,
it follows that, for any $w\in A_p^{\rho,\,\tz}(\rn)$ and $f\in L^p(w)$,
\begin{align*}
\lf\|g_{L,\,\lambda}^\ast(f)\r\|_{L^p(w)}
&\ls\sum_{j=0}^\fz 2^{-j\lambda n/2}\lf\|S_{L,\,2^j}(f)\r\|_{L^p(w)}
\ls\sum_{j=0}^\fz 2^{-j\lambda n/2}2^{j(n+2\tz)}[w]_{A_{p}^{\rho,\,\tz}(\rn)}^{\max\{1,\,\frac{1}{p-1}\}}
\lf\|\wz{S}_L^{4\tz}(f)\r\|_{L^p(w)}\\
&\ls\sum_{j=0}^\fz 2^{-j\frac{n}{2}[\lz-2(1+\frac{2\tz}{n})]}
[w]_{A_p^{\rho,\,\tz}(\rn)}^{\max\{1,\,\frac{1}{p-1}\}+\max\{\frac12,\,\frac{1}{p-1}\}}\|f\|_{L^p(w)}\noz\\
&\ls[w]_{A_p^{\rho,\,\tz}(\rn)}^{\max\{1,\,\frac{1}{p-1}\}+\max\{\frac12,\,\frac{1}{p-1}\}}\|f\|_{L^p(w)}.
\end{align*}
This finishes the proof of Theorem \ref{thm-2}.
\end{proof}

\noindent{\bf Acknowledgements.}
Junqiang Zhang would like to thank Yangyang Zhang
for some helpful discussions on the proof of Theorem \ref{thm-2}
and Dachun Yang would like to thank Professor Ji Li for a helpful
discussion on the subject of this article.


\bigskip

\noindent {Junqiang Zhang}

\medskip

\noindent{School of Science, China University of Mining and Technology-Beijing,
Beijing 100083, People's Republic of China}

\smallskip

\noindent {\it E-mail}: \texttt{jqzhang@cumtb.edu.cn} (J. Zhang)

\bigskip

\noindent {Dachun Yang (Corresponding author)}

\medskip

\noindent{Laboratory of Mathematics and Complex Systems (Ministry of Education of China),
School of  Mathematical Sciences, Beijing Normal University, Beijing 100875, People¡¯s Republic of China}

\smallskip

\noindent {\it E-mail}: \texttt{dcyang@bnu.edu.cn} (D. Yang)

\end{document}